\documentclass[12pt]{amsart}
\usepackage{amsmath,amssymb,latexsym,cancel,rotating}
\usepackage{graphicx,amssymb,mathrsfs,amsmath,color,fancyhdr,amsthm}
\usepackage[all]{xy}
\usepackage{verbatim} 
\usepackage[all]{xy}
\usepackage{graphicx}
\usepackage{rotating} 
\usepackage{tikz-cd}
\usepackage{setspace}

\newtheorem{theorem}{Theorem}[section]
\newtheorem{lemma}[theorem]{Lemma}
\newtheorem{corollary}[theorem]{Corollary}
\newtheorem{definition}[theorem]{Definition}
\newtheorem{proposition}[theorem]{Proposition}

\newtheorem{remark}[theorem]{Remark}

  \def\leq{\leqslant}  \def\geq{\geqslant}

\begin{document}
\onehalfspacing   

\title[Iwasawa Decomposition and Total Positivity]
{Notes on Chevalley Groups and Root Category \uppercase\expandafter{\romannumeral4}: Iwasawa Decomposition and Total Positivity}

\author[Buyan Li]{Buyan Li}
\address{Department of Mathematical Sciences, Tsinghua University, Beijing 100084, P. R. China}
\email{liby21@mails.tsinghua.edu.cn}



\keywords{Chevalley group, Iwasawa decomposition, total positivity}

\bibliographystyle{abbrv}

\maketitle

\begin{abstract}
In this paper, we study the Cartan decomposition and Iwasawa decomposition of the Chevalley group from the root category.
Then we investigate the Iwasawa decomposition for totally positive submonoid of the Chevalley group.
\end{abstract}

\setcounter{tocdepth}{1}\tableofcontents

\section{Introduction}
Chevalley groups are a class of simple groups over arbitrary fields, which generalize the classical Lie groups. 
Based on the Gabriel theorem for representation theory of quivers, Ringel \cite{RINGEL1990137} realizes the positive part of a Lie algebra, and Peng and Xiao \cite{1997ROOT} realize the whole simple Lie algebra using the root category.
Then Xiao and the present author construct the Chevalley group of the root category in \cite{notes1}.
As a continuation of the study of maximal compact subgroups of Chevalley groups in \cite{notes2}, in this paper, we prove the Cartan decomposition and the Iwasawa decomposition for both the Lie algebra and the Chevalley group from the root category.

Lusztig generalizes the theory of totally positive real matrices to any split reductive algebraic group over some field in \cite{TR1,TR2}, establishes nontrivial connection between totally positive monoids and canonical bases, and obtains numerous results.
In \cite{notes3}, we apply Lusztig's theory to Chevalley groups and describe the size of totally positive monoids in terms of the root subgroups corresponding to indecomposable objects in the root category.
In this paper, we study the Iwasawa decomposition of totally positive monoid.
We give a explicit (inductive) algorithm for computing certain components of the totally positive monoid under the Iwasawa decomposition.
As a corollary, we show that the intersection of the totally positive monoid and the maximal compact subgroup is trivial.

\section{Root Categories and Chevalley Groups}
In this section, we recall the construction of simple Lie algebra \cite{1997ROOT} and Chevalley group \cite{notes1} from the root category.

Let $A$ be a finite dimensional associative representation-finite hereditary algebra over some base field $k$, and $\operatorname{mod}A$ be the category of finite dimensional $A$-modules. 
Let $D^b(A)$ be the bounded derived category of $\operatorname{mod}A$ with shift functor denoted by $T$. 
The root category $\mathcal{R}$ of $A$ is the orbit category $D^b(A)/T^2$.
Note that both $D^b(A)$ and $\mathcal{R}$ are triangulated categories, with translation functor $T$.
Let $\operatorname{ind}\mathcal{R}$ be the set of all the indecomposable objects of $\mathcal{R}$. For any object $M\in \mathcal{R}$, we denote its isomorphism class by $[M]$.

Let $\mathcal{K}$ be the Grothendieck group of $\mathcal{R}$, that is, an abelian group defined by generators $H_{[M]}$, for all $M\in \mathcal{R}$, and subject to relations $H_{[X]}+H_{[Z]}=H_{[Y]}$, if there exists a triangle $X\rightarrow Y\rightarrow Z \rightarrow TX$ in $\mathcal{R}$.
Let $\mathcal{N}$ be a free abelian group with a basis $\{u_{[X]}\}_{X\in \operatorname{ind}\mathcal{R}}$. For simplicity, we write $H_M$ instead of $H_{[M]}$, $u_X$ instead of $u_{[X]}$.
Let $(-|-)$ be the symmetric Euler form on $\mathcal{K}$, that is 
\begin{align*}
    (H_X|H_Y)=&\operatorname{dim}_k\operatorname{Hom}_{\mathcal{R}}(X,Y)-\operatorname{dim}_k\operatorname{Hom}_{\mathcal{R}}(X,TY) \\
        &+\operatorname{dim}_k\operatorname{Hom}_{\mathcal{R}}(Y,X)-\operatorname{dim}_k\operatorname{Hom}_{\mathcal{R}}(Y,TX)
\end{align*}
for any $X,Y\in \mathcal{R}$.

Peng and Xiao \cite{1997ROOT} prove the existence of Hall polynomials in the root category, and use the evaluation of Hall polynomials to define the Lie bracket. 
Then they realize the $\mathbb{Z}$-form $\mathfrak{g}_{\mathbb{Z}}$ of the complex simple Lie algebra $\mathfrak{g}_{\mathbb{C}}=(\mathcal{K}\oplus \mathcal{N})\otimes \mathbb{C}$, whose roots  corresponds bijectively to the indecomposable objects of $\mathcal{R}$.

Based on their work, we construct the Chevalley groups from the root category in \cite{notes1}.
For any $X\in \operatorname{ind}\mathcal{R}$ and $t\in \mathbb{C}$, we define an automorphism $E_{[X]}(t)=\operatorname{exp}(t \operatorname{ad}u_X)$ of the Lie algebra $\mathfrak{g}_{\mathbb{C}}$.
Since the coefficients of the action of $E_{[X]}(t)$ on a Chevalley basis of $\mathfrak{g}_{\mathbb{C}}$ are in $\mathbb{Z}[t]$, we can extend the definition of $E_{[X]}(t)$ to any field $\mathbb{K}$.
Let $\mathfrak{g}_{\mathbb{K}}=\mathfrak{g}_{\mathbb{Z}}\otimes \mathbb{K}$, and $\mathbf{G}=\mathbf{G}(\mathcal{R},\mathbb{K})$ be the subgroup of $\operatorname{Aut}(\mathfrak{g}_{\mathbb{K}})$ generated by $E_{[X]}(t)$, for all $X\in \operatorname{ind}\mathcal{R}$ and $t\in \mathbb{K}$.
This group is called the Chevalley group of the root category $\mathcal{R}$ over field $\mathbb{K}$.

For each $X\in \operatorname{ind}\mathcal{R}$ and $t\in \mathbb{K}^{\times}$, define automorphism $h_{[X]}(t)$ of $\mathfrak{g}_{\mathbb{K}}$ by its action on the basis:
\begin{align*}
    &h_{[X]}(t)u_{Y} = t^{A_{XY}} u_{Y}, \\
    &h_{[X]}(t)H_{S} = H_{S},
\end{align*}
  where $A_{XY}=\frac{2(H_X|H_Y)}{(H_X|H_X)}$ is an integer.

\begin{lemma}\cite[Lemma 4.10]{notes1}\label{hEh}
    For any  $ X,Y \in \operatorname{ind}\mathcal{R},  t \in \mathbb{K}^{\times}, s \in \mathbb{K}$,
    \begin{align*}
        h_{[X]}(t) E_{[Y]}(s) h_{[X]}(t)^{-1} = E_{[Y]}(t^{A_{XY}}s).
    \end{align*}
\end{lemma}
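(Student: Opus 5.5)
The plan is to show that $h=h_{[X]}(t)$ is a Lie algebra automorphism of $\mathfrak{g}_{\mathbb{K}}$, and then to conjugate the exponential series defining $E_{[Y]}(s)$ term by term. Working over $\mathbb{C}$ first and passing to $\mathbb{K}$ by the integrality remark is cleanest. Both sides of the claimed identity act on the Chevalley basis $\{H_S\}\cup\{u_Z\}$ by matrices whose entries are polynomials in $s$ with integer coefficients, times integer powers of $t$. It therefore suffices to prove the identity over $\mathbb{C}$ (or over $\mathbb{Z}[t,t^{-1},s]$) and then specialize.

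\textbf{Step 1: $h$ is a Lie algebra automorphism.} Write $\alpha_Z\in\mathcal{K}$ for the root attached to $Z\in\operatorname{ind}\mathcal{R}$, namely $\alpha_Z=H_Z$ viewed via the Euler form. The map $h$ scales $u_Z$ by $t^{A_{XZ}}$, and $A_{XZ}=\langle \alpha_Z,\alpha_X^\vee\rangle$ is additive in $Z$: it is linear in $H_Z$. In the Peng--Xiao Lie algebra we have $[u_Y,u_Z]\in \mathbb{Z}u_W$ when $H_W=H_Y+H_Z$ (the bracket is given by Hall polynomial evaluations, so $[u_Y,u_Z]$ is a multiple of $u_W$ with $W$ the extension/cone object). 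We also have $[u_Y,u_{TY}]\in\mathcal{K}$, since $H_{TY}=-H_Y$, and $[\mathcal{K},u_Z]\subseteq\mathbb{Z}u_Z$. Additivity of $A_{X\,\cdot}$ then gives $h[a,b]=[ha,hb]$ on basis elements. For brackets landing in $\mathcal{K}$, the exponents cancel because $A_{X,TY}=-A_{XY}$, which matches $h|_{\mathcal{K}}=\mathrm{id}$. This step is the main obstacle: it requires knowing that the root-category Lie bracket is graded by $\mathcal{K}$, i.e.\ that $[u_Y,u_Z]$ only involves $u_W$ with $H_W=H_Y+H_Z$. I would extract this from the construction in \cite{1997ROOT}, where the structure constants are Hall numbers counting triangles $Z\to W\to Y\to TZ$. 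The existence of such a triangle forces $H_W=H_Y+H_Z$ by the defining relations of $\mathcal{K}$.

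\textbf{Step 2: conjugating the adjoint action.} Since $h$ is an automorphism, $h\,(\operatorname{ad}u_Y)\,h^{-1}=\operatorname{ad}(h u_Y)=\operatorname{ad}(t^{A_{XY}}u_Y)=t^{A_{XY}}\operatorname{ad}u_Y$.

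\textbf{Step 3: exponentiating.} The operator $\operatorname{ad}u_Y$ is nilpotent, so $\exp$ is a finite sum and conjugation commutes with it termwise:
\begin{align*}
h\,E_{[Y]}(s)\,h^{-1}=\sum_{n\geq 0}\frac{s^n}{n!}\bigl(h\operatorname{ad}u_Y h^{-1}\bigr)^n=\exp\bigl(t^{A_{XY}}s\operatorname{ad}u_Y\bigr)=E_{[Y]}(t^{A_{XY}}s).
\end{align*}
Over an arbitrary field $\mathbb{K}$, we instead use the divided powers $(\operatorname{ad}u_Y)^n/n!$, which are integral on the Chevalley basis. The same computation holds with integer coefficients, and specializing the polynomial identity to $\mathbb{K}$ finishes the proof.
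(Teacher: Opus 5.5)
Your proposal is correct and is essentially the standard argument behind the cited \cite[Lemma 4.10]{notes1}; this paper only quotes the lemma and gives no proof of its own. The argument runs: $h_{[X]}(t)$ is the Lie algebra automorphism attached to the character $H\mapsto t^{2(H_X|H)/(H_X|H_X)}$ of the $\mathcal{K}$-grading, so conjugation rescales $\operatorname{ad}u_Y$ by $t^{A_{XY}}$, and exponentiating over $\mathbb{Z}[t^{\pm 1},s]$ and then specializing yields the identity over $\mathbb{K}$. The grading you flag as the main obstacle also follows directly from the Jacobi identity, since each $u_Z$ is an $\operatorname{ad}\mathcal{K}$-eigenvector with eigenvalue determined by $H_Z$ through the symmetric Euler form, which is nondegenerate; so you do not need the Hall-number description of the bracket.
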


For each $X\in \operatorname{ind}\mathcal{R}$, we use BGP reflection functor to define an automorphism $n_{[X]}$ of $\mathfrak{g}_{\mathbb{K}}$.
We prove that $n_{[X]},h_{[X]}(t)$ are in $\mathbf{G}$.
Let $\mathbf{H}$ be the subgroup of $\mathbf{G}$ generated by $h_{[M]}(t)$, for all $M \in \operatorname{ind}\mathcal{R}, t \in \mathbb{K}^{\times}$, and let $\mathbf{N}$ be the subgroup of $\mathbf{G}$ generated by $\mathbf{H}$ and $n_{[M]},\forall M \in \operatorname{ind}\mathcal{R}$.
We have $\mathbf{H}$ is a normal subgroup of $\mathbf{N}$, and $\mathbf{N}/\mathbf{H}\cong \mathbf{W}$, where $\mathbf{W}$ is the Weyl group associated to the root system of $\mathcal{R}$. 

If we arbitrarily fix a complete section of the root category $\mathcal{R}$, that is, a connected full subquiver of the AR-quiver of $\mathcal{R}$, which contains one vertex for each $\tau$-orbit ($\tau$ is the AR-translation), then we get a complete hereditary subcategory $\mathcal{B}$ of $\mathcal{R}$.
We fix a set of representatives $\{S_1,\cdots,S_n\}$ of isomorphism classes of simple objects in $\mathcal{B}$.

Note that $\mathbf{H}$ is an abelian subgroup of $\mathbf{G}$, and each element in $\mathbf{H}$ is of the form $\prod\limits_{i=1}^n h_{[S_i]}(t_i)$, $t_i\in \mathbb{K}^{\times}$.
Moreover, we have 
 $ \prod\limits_{i=1}^n h_{[S_i]}(t_i) =1$ if and only if 
       $ (t_1,\cdots,t_n) \in (\mathbb{K}^{\times})^n$ satisfies $\prod\limits_{i=1}^n t_i^{a_{ij}} = 1$, for all $ j=1,\cdots,n$,
    where $(a_{ij})$ is the Cartan matrix.

Let $\mathbf{U}^+$ (resp. $\mathbf{U}^-$) be the subgroup of $\mathbf{G}$ generated by $E_{[X]}(t)$, for all $t\in \mathbb{K}$ and $X\in \operatorname{ind}\mathcal{B}$ (resp. $X\in \operatorname{ind}T\mathcal{B}$).
Let $\mathbf{B}^+$ (resp. $\mathbf{B}^-$) be the subgroup of $\mathbf{G}$ generated by $\mathbf{H}$ and $\mathbf{U}^+$ (resp. $\mathbf{U}^-$).

\begin{proposition}
    \cite[Cor 4.42,4.44]{notes1}
    Assume $\mathbb{K}$ is an algebraically closed field, then $\mathbf{G}$ is a semisimple linear algebraic group, and the Lie algebra of $\mathbf{G}$ is isomorphic to $\mathfrak{g}_{\mathbb{K}}$.
\end{proposition}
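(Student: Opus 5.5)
The approach is to realize $\mathbf{G}$ as a closed connected subgroup of $GL(\mathfrak{g}_{\mathbb{K}})$, identify $\mathbf{B}^+=\mathbf{H}\mathbf{U}^+$ and $\mathbf{B}^-=\mathbf{H}\mathbf{U}^-$ as opposite Borel subgroups, show that the radical is trivial, and then compute the Lie algebra by a dimension count on the big cell $\mathbf{U}^-\mathbf{H}\mathbf{U}^+$.

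\emph{Step 1: $\mathbf{G}$ is a connected algebraic group.} For each $X\in\operatorname{ind}\mathcal{R}$, the map $\mathbb{G}_a\to GL(\mathfrak{g}_{\mathbb{K}})$, $t\mapsto E_{[X]}(t)$, is a morphism of algebraic groups. This holds because its matrix coefficients in the Chevalley basis $\{H_{S_i}\}\cup\{u_Y\}$ lie in $\mathbb{Z}[t]$, and $\operatorname{ad}u_X$ is nilpotent, so the exponential is a finite sum. Each image is therefore an irreducible closed subvariety containing $1$. I would then invoke the standard theorem that a subgroup generated by such a family is closed and connected. This gives that $\mathbf{G}$ is a connected linear algebraic group.

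\emph{Step 2: the subgroups $\mathbf{H}$, $\mathbf{U}^{\pm}$, $\mathbf{B}^{\pm}$.} The map $(\mathbb{K}^\times)^n\to\mathbf{G}$, $(t_i)\mapsto\prod_i h_{[S_i]}(t_i)$, is a morphism whose image is $\mathbf{H}$, so $\mathbf{H}$ is a torus. Ordering the basis of $\mathfrak{g}_{\mathbb{K}}$ by the height of dimension vectors, every $E_{[X]}(t)$ with $X\in\operatorname{ind}\mathcal{B}$ is unipotent upper triangular, so $\mathbf{U}^+$ is a connected unipotent group. By Lemma~\ref{hEh}, $\mathbf{H}$ normalizes $\mathbf{U}^+$, hence $\mathbf{B}^+=\mathbf{H}\mathbf{U}^+$ is connected and solvable. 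The same holds for $\mathbf{B}^-$.

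Next I would use the Bruhat decomposition $\mathbf{G}=\bigsqcup_{w\in\mathbf{W}}\mathbf{B}^+ n_w\mathbf{B}^+$, which follows from $\mathbf{N}/\mathbf{H}\cong\mathbf{W}$ and the usual $(B,N)$-pair argument. Together with uniqueness of expression in $\mathbf{U}^-\mathbf{H}\mathbf{U}^+$, this gives:
\begin{itemize}
\item $\mathbf{B}^+\cap\mathbf{B}^-=\mathbf{H}$;
\item the big cell $\Omega=\mathbf{U}^-\mathbf{H}\mathbf{U}^+$ is open and dense in $\mathbf{G}$, with $\dim\Omega=2N+n$, where $N$ is the number of indecomposables in $\mathcal{B}$.
\end{itemize}

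\emph{Step 3: semisimplicity.} Let $R$ be the radical of $\mathbf{G}$. I would use that $R$ lies in every Borel subgroup; since $\mathbf{B}^\pm$ are contained in Borel subgroups and $R$ is normal, I expect to get $R\subseteq\mathbf{B}^+\cap\mathbf{B}^-=\mathbf{H}$. So $R$ is a connected normal subgroup of the torus $\mathbf{H}$. By rigidity of tori, the connected group $\mathbf{G}$ acts trivially on $R$ by conjugation, so $R$ is central.

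Take $h=\prod_i h_{[S_i]}(t_i)\in R$. It commutes with every $E_{[Y]}(s)$, so by Lemma~\ref{hEh} we get $\prod_i t_i^{A_{S_iY}}=1$ for all $Y$. Then $h$ acts trivially on every $u_Y$, and also on every $H_S$, so $h=1$. Hence $R=1$ and $\mathbf{G}$ is semisimple.

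\emph{Step 4: the Lie algebra.} Differentiating at the identity:
\begin{itemize}
\item $\frac{d}{dt}E_{[X]}(t)|_{t=0}=\operatorname{ad}u_X$;
\item differentiating $h_{[S_i]}(t)$ at $t=1$ gives the diagonal operator $u_Y\mapsto A_{S_iY}u_Y$, which is $\operatorname{ad}$ of the coroot element attached to $S_i$.
\end{itemize}
Thus $\operatorname{Lie}(\mathbf{G})\supseteq\operatorname{ad}(\mathfrak{g}_{\mathbb{K}})$. Since $\dim\mathbf{G}=\dim\Omega=2N+n=\dim\mathfrak{g}_{\mathbb{K}}$, equality holds provided $\operatorname{ad}$ is injective, giving $\operatorname{Lie}(\mathbf{G})\cong\mathfrak{g}_{\mathbb{K}}$.

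The main obstacle is exactly this last point together with the openness of the big cell. In small characteristic, $\mathfrak{g}_{\mathbb{K}}$ may have a nontrivial centre inside $\mathcal{K}\otimes\mathbb{K}$, and then $\operatorname{ad}$ is not injective. In that case I would instead identify $\operatorname{Lie}(\mathbf{H})$ directly with $\mathcal{K}\otimes\mathbb{K}$ through the cocharacters $t\mapsto h_{[S_i]}(t)$, and the root spaces through the $\mathbb{G}_a$ images, rather than passing through $\operatorname{ad}$. For the big cell, I would deduce openness from the product map $\mathbf{U}^-\times\mathbf{H}\times\mathbf{U}^+\to\mathbf{G}$ being injective with dense image, by comparing dimensions.
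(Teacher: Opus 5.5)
The paper does not prove this statement; it only quotes it from \cite{notes1}, so I assess your proposal on its own. Steps 1--4 are the standard Steinberg-style argument. They prove semisimplicity, after one repair described below. They also prove $\operatorname{Lie}(\mathbf{G})\cong\mathfrak{g}_{\mathbb{K}}$ whenever $\operatorname{ad}$ is injective. That happens exactly when $\operatorname{char}\mathbb{K}\nmid\det(a_{ij})$, in particular for $\mathbb{K}=\mathbb{C}$, which is the only case the paper uses later.

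The genuine gap is your fallback for the remaining characteristics. By your own Step 4 computation, the differential of $t\mapsto h_{[S_i]}(t)$ is $\operatorname{ad}$ of the coroot attached to $S_i$. So when $\operatorname{ad}$ has a kernel, these differentials are linearly dependent and the isogeny $(\mathbb{K}^\times)^n\to\mathbf{H}$ is inseparable. The cocharacters then do not span $\operatorname{Lie}(\mathbf{H})$, let alone give a bracket-preserving identification with $\mathcal{K}\otimes\mathbb{K}$.

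In fact no repair is possible, because the assertion fails there. Take $\mathcal{R}$ of type $A_1$ and $\operatorname{char}\mathbb{K}=2$:
\begin{itemize}
\item In the basis $u_S,H_S,u_{TS}$ we have $\mathbf{H}=\{\operatorname{diag}(s,1,s^{-1})\}$ and $\dim\mathbf{G}=3$.
\item Hence $\operatorname{Lie}(\mathbf{G})$ is spanned by $D=\operatorname{diag}(1,0,-1)$, $\operatorname{ad}u_S$ and $\operatorname{ad}u_{TS}$.
\item These satisfy $[D,\operatorname{ad}u_S]=\operatorname{ad}u_S$, $[D,\operatorname{ad}u_{TS}]=\operatorname{ad}u_{TS}$ and $[\operatorname{ad}u_S,\operatorname{ad}u_{TS}]=\operatorname{ad}H_S=0$.
\item This algebra, which is $\cong\mathfrak{pgl}_2(\mathbb{K})$, is not nilpotent and has trivial centre.
\item By contrast, $\mathfrak{g}_{\mathbb{K}}\cong\mathfrak{sl}_2(\mathbb{K})$ is the Heisenberg algebra, with centre $\mathbb{K}H_S$.
\end{itemize}
So the second assertion needs a hypothesis such as $\operatorname{char}\mathbb{K}\nmid\det(a_{ij})$. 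Your proof should be stated under that hypothesis rather than with a fallback.

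Second, Step 3 as written does not yield $R\subseteq\mathbf{B}^+\cap\mathbf{B}^-$. Knowing that $\mathbf{B}^{\pm}$ lie in some Borel subgroups $B_1,B_2$ only gives $R\subseteq B_1\cap B_2$. You need $\mathbf{B}^{\pm}$ to be Borel subgroups themselves. This follows from your $BN$-pair:
\begin{itemize}
\item $R\mathbf{B}^+$ is a connected solvable subgroup containing $\mathbf{B}^+$, hence a standard parabolic $\mathbf{B}^+\mathbf{W}_J\mathbf{B}^+$.
\item If $J\neq\emptyset$, it contains some $n_{[S_i]}$. It then contains both $E_{[S_i]}(\mathbb{K})$ and $n_{[S_i]}E_{[S_i]}(\mathbb{K})n_{[S_i]}^{-1}=E_{[TS_i]}(\mathbb{K})$.
\item These generate a nontrivial image of the perfect group $SL_2(\mathbb{K})$, which is not solvable.
\end{itemize}
Hence $J=\emptyset$ and $R\subseteq\mathbf{B}^+$; likewise $R\subseteq\mathbf{B}^-$.

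Finally, openness of the big cell is not needed, and ``injective with dense image'' would not give it anyway. The dimension count follows from two bounds. We have $\dim\mathbf{G}\leq 2N+n$, since Bruhat writes $\mathbf{G}$ as a finite union of images of $\mathbf{U}^+\times\mathbf{B}^+$ under $(u,b)\mapsto un_wb$. We have $\dim\mathbf{G}\geq 2N+n$ by injectivity of the product map on $\mathbf{U}^-\times\mathbf{H}\times\mathbf{U}^+$.
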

In this case, $\mathbf{B}^+,\mathbf{B}^-$ is a pair of opposite Borel subgroups of $\mathbf{G}$, whose intersection is the maximal torus $\mathbf{H}$, and $\mathbf{U}^+,\mathbf{U}^-$ are their unipotent radicals, respectively.

\section{Cartan Decomposition and Iwasawa Decomposition}
In this section, we obtain the Cartan decomposition and the Iwasawa decomposition for the simple Lie algebra and the Chevalley group from the root category $\mathcal{R}$, with the field $\mathbb{K}$ taken to be $\mathbb{C}$.
For the classical results of these decompositions, see reference \cite{Knapp}; however, note that in our case, these decompositions are more intuitional and simpler.

From now on, we denote $\mathfrak{g}=\mathfrak{g}_{\mathbb{C}}$ and $\mathbf{G}=\mathbf{G}(\mathcal{R},\mathbb{C})$.

\subsection{Cartan Decomposition}

In \cite{notes2}, we have essentially obtained the Cartan decomposition for both the Lie algebra and the Chevalley group, though not stated explicitly.
We now present it in full, supplementing the necessary details.

Recall that the shift functor $T$ of the root category $\mathcal{R}$ induces an $\mathbb{R}$-linear involution $\theta$ of $\mathfrak{g}$, given by 
\begin{align*}
    \theta(H_X)=H_{TX}, \quad \theta(u_X)=u_{TX},
\end{align*}
for each $X\in \operatorname{ind}\mathcal{R}$, and $\theta$ maps the imaginary unit $i$ to $-i$. 
We call $\theta$ the Cartan involution of $\mathfrak{g}$.

Let 
\begin{align*}
    \mathfrak{r}=\{ X\in \mathfrak{g}| \theta(X)=X \}
\end{align*}
be the $\theta$-fixed points of $\mathfrak{g}$, which is a real Lie algebra.
By an approach analogous to that in \cite{1997ROOT}, we construct a $\mathbb{Z}$-form of the real Lie algebra $\mathfrak{r}$ from the root category $\mathcal{R}$.
As a result, we obtain a $\mathbb{Z}$-basis of $\mathfrak{r}$. 
Precisely, if we choose a subset $\Delta$ of $\operatorname{ind}\mathcal{R}$ such that $\{H_X,X\in \Delta\}$ form a basis of $\mathcal{K}$, (for example, the set of simple objects in a complete hereditary subcategory), then 
\begin{align*}
    \{i\frac{H_X}{d(X)},X\in \Delta\}\bigcup \{u_X+u_{TX},i(u_X-u_{TX}),X\in \operatorname{ind}\mathcal{R}\}
\end{align*}
form a $\mathbb{Z}$-basis of $\mathfrak{r}$. 
Here $d(X)=\operatorname{dim}_k\operatorname{End}_{\mathcal{R}}X$ for any $X\in \operatorname{ind}\mathcal{R}$.
Moreover, we have the following result.

\begin{proposition}
    \cite[Proposition 30]{notes2}
    The real Lie algebra $\mathfrak{r}$ is a compact real form of $\mathfrak{g}$.
\end{proposition}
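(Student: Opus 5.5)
The plan is to verify the two defining properties of a compact real form: first, that $\mathfrak{r}$ is a real form of $\mathfrak{g}$, i.e.\ $\mathfrak{g}=\mathfrak{r}\oplus i\mathfrak{r}$ as real vector spaces; second, that the Killing form of $\mathfrak{g}$ restricted to $\mathfrak{r}$ is negative definite.

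For the real-form part, $\theta$ is an $\mathbb{R}$-linear involution that is conjugate-linear, so $\mathfrak{r}$ is its $+1$ eigenspace and $i\mathfrak{r}$ is its $-1$ eigenspace. Every $Z\in\mathfrak{g}$ decomposes as $Z=\tfrac12(Z+\theta Z)+\tfrac12(Z-\theta Z)$. We also need $\theta$ to be a Lie algebra (conjugate-linear) automorphism, so that $\mathfrak{r}$ is closed under the bracket. This amounts to checking that the Hall-polynomial structure constants of \cite{1997ROOT} are invariant under the shift $T$. That invariance holds because $T$ is an autoequivalence of $\mathcal{R}$ preserving triangles, hence preserving Hall numbers, and because $(H_{TX}|H_{TY})=(H_X|H_Y)$. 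One subtlety: in the standard Chevalley normalization $T$ sends the root $\alpha$ to $-\alpha$, and we must make sure that $[u_X,u_{TX}]$ is a positive multiple of $-H_X$ (rather than $+H_X$) with the paper's sign conventions. Otherwise we would get the split-type involution, not the compact one.

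For negative definiteness, compute the Killing form $\kappa$ on the $\mathbb{Z}$-basis listed above. Using the root space decomposition relative to the Cartan subalgebra $\mathcal{K}\otimes\mathbb{C}$:
\begin{itemize}
\item $\kappa(u_X,u_Y)=0$ unless $Y\cong TX$.
\item $\kappa(u_X,u_{TX})=c_X$ with $c_X>0$.
\item $\kappa$ is positive definite on the real span of the $H_X$, being a positive multiple of $(-|-)$ on each simple component.
\end{itemize}
Then $\kappa(iH,iH)=-\kappa(H,H)<0$ for nonzero real $H$. Next,
\[
\kappa(u_X+u_{TX},u_X+u_{TX})=2c_X>0
\]
under the sign convention $[u_X,u_{TX}]=-H_X$; with that convention one instead gets $-2c_X$. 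Similarly $\kappa(i(u_X-u_{TX}),i(u_X-u_{TX}))=2\kappa(u_X,u_{TX})$, whose sign is likewise fixed by the convention. Distinct basis pairs, together with the Cartan part, are mutually orthogonal. So $\kappa|_{\mathfrak{r}}$ is diagonal with negative entries exactly when $\kappa(u_X,u_{TX})<0$.

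The main obstacle is therefore this sign: one must check from the Peng--Xiao bracket that $[u_X,u_{TX}]$ equals $-H_X$ (up to a positive factor), so that $\kappa(u_X,u_{TX})<0$. I would first compute it via the Hall-number formula for $X\to 0\to TX\to TX$ in $\mathcal{R}$. If instead the convention gives $+H_X$, then the correct compact form requires replacing $\theta$ by $-\theta$ on the root vectors, and the basis must be adjusted accordingly. Once the sign is confirmed, negative definiteness follows, and by definition $\mathfrak{r}$ is a compact real form.
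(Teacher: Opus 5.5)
Your reduction is the right one, and it is the route the paper points to; the paper notes that $-(u_X,u_{TX})>0$ is established in \cite{notes2} via properties of $\gamma$. The invariant form of \cite[Definition 29]{notes2} is the Killing form $\kappa$, and it is positive definite on the real span of the $H_X$, since $\kappa(H,H)=\sum_Z (H|H_Z)^2$. Your $T$-invariance argument for the real-form part is fine, and negative definiteness on $\mathfrak{r}$ comes down to $\kappa(u_X,u_{TX})<0$.

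\textbf{The gap.} You never establish this sign, and the test you propose for it gives the wrong answer in the conventions in force. You translate $\kappa(u_X,u_{TX})<0$ into ``$[u_X,u_{TX}]$ is a positive multiple of $-H_X$''. That silently assumes $H_X$ is a positive multiple of the coroot of $u_X$, and here it is not.
\begin{itemize}
\item The paper uses $\exp(t\operatorname{ad}\frac{H_X}{d(X)})=h_{[X]}(e^{-t})$. Together with the definition of $h_{[X]}$, this forces $[H_X,u_Y]=-(H_X|H_Y)u_Y$. So $[H_X,u_X]=-2d(X)u_X$, and the coroot is $-H_X/d(X)$.
\item Peng--Xiao define $[u_X,u_{TX}]=H_X/d(X)$. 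This is part of the definition of the bracket, not something extracted from the triangle $X\to 0\to TX\to TX$.
\end{itemize}
So your check would fail even though the proposition is true, and your plan would then have you abandon the given $\theta$. That fallback cannot prove anything about the given $\mathfrak{r}$. As written ($u_X\mapsto -u_{TX}$, $H_X\mapsto H_{TX}$) it is not even a Lie algebra automorphism: it reverses the sign of every nonzero $[u_X,u_Y]$ with $X\not\cong TY$. Likewise, your bullet ``$\kappa(u_X,u_{TX})=c_X$ with $c_X>0$'' asserts the opposite of what is true and needed, and the sentence after it contradicts it.

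\textbf{How to close it.} The missing step is a one-line invariance computation, with no Hall numbers needed. Since $[u_{TX},H_X]=(H_X|H_{TX})u_{TX}=-2d(X)u_{TX}$, we get
\[
\frac{\kappa(H_X,H_X)}{d(X)}=\kappa([u_X,u_{TX}],H_X)=\kappa(u_X,[u_{TX},H_X])=-2d(X)\,\kappa(u_X,u_{TX}).
\]
Hence $\kappa(u_X,u_{TX})=-\kappa(H_X,H_X)/(2d(X)^2)<0$, because $\kappa(H_X,H_X)=\sum_Z (H_X|H_Z)^2\geq (H_X|H_X)^2>0$. Then:
\begin{itemize}
\item $u_X+u_{TX}$ and $i(u_X-u_{TX})$ both have $\kappa$-square $2\kappa(u_X,u_{TX})<0$;
\item $iH$ has $\kappa$-square $-\kappa(H,H)<0$ for nonzero real $H$.
\end{itemize}
With your orthogonality observations, $\kappa|_{\mathfrak{r}}$ is negative definite. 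The cited proof reaches the same sign from the explicit value $(u_X,u_{TX})=-4+\sum_{Y,L}\gamma_{TX,Y}^L\gamma_{X,L}^Y$ and sign properties of the $\gamma$'s. The invariance argument shows the sign is forced by $[u_X,u_{TX}]=H_X/d(X)$ together with $[H_X,u_Y]=-(H_X|H_Y)u_Y$.
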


Since $\theta$ is an involution, it has eigenvalues $\pm 1$.
Let $\mathfrak{p}$ be the eigenspace corresponding to eigenvalue -1. 
Then $\mathfrak{g}$ has an eigenspace decomposition
\begin{align*}
    \mathfrak{g}=\mathfrak{r}\oplus \mathfrak{p}.
\end{align*}
This is called the Cartan decomposition of $\mathfrak{g}$.
Note that $\mathfrak{r}$ is $\mathbb{R}$-spanned by $i\frac{H_X}{d(X)}$, $u_X+u_{TX},i(u_X-u_{TX})$, $X\in \operatorname{ind}\mathcal{R}$, and $\mathfrak{p}$ is $\mathbb{R}$-spanned by $\frac{H_X}{d(X)},u_X-u_{TX},i(u_X+u_{TX})$, $X\in \operatorname{ind}\mathcal{R}$.

Now we consider the Cartan decomposition of the Chevalley group $\mathbf{G}$.
Recall that in \cite{notes2} we let $\mathbf{R}$ be the subgroup of $\operatorname{Aut}(\mathfrak{r})$ generated by $\operatorname{exp}(t\operatorname{ad}i\frac{H_X}{d(X)})$, $\operatorname{exp}(t\operatorname{ad}(u_X+u_{TX})),\operatorname{exp}(t\operatorname{ad}i(u_X-u_{TX}))$, with $X\in \operatorname{ind}\mathcal{R}$ and $t\in \mathbb{R}$, and let $\mathbf{K}$ be the identity component of $\mathbf{R}$.
As a corollary of \cite[Remark 37]{notes2}, we actually have $\mathbf{K}=\mathbf{R}$.

\begin{proposition}
    \cite[Proposition 36]{notes2}
    The group $\mathbf{K}$ is a maximal compact subgroup of $\mathbf{G}$. 
    In fact, it coincides with the inner automorphism group  $\operatorname{Int}(\mathfrak{r})$.
\end{proposition}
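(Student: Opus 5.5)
The statement has two parts: $\mathbf{K}$ is a maximal compact subgroup of $\mathbf{G}$, and $\mathbf{K}=\operatorname{Int}(\mathfrak{r})$. I would prove the second part first and deduce the first from it together with the classical structure theory of compact real forms.

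\emph{Step 1: $\mathbf{K}=\operatorname{Int}(\mathfrak{r})$.} The group $\operatorname{Int}(\mathfrak{r})$ is the connected subgroup of $\operatorname{Aut}(\mathfrak{r})$ generated by $\exp(\operatorname{ad}Y)$ for $Y\in\mathfrak{r}$. Every generator of $\mathbf{R}$ has this form, with $Y$ one of the basis elements $i\frac{H_X}{d(X)}$, $u_X+u_{TX}$, $i(u_X-u_{TX})$. Hence $\mathbf{K}\subseteq\operatorname{Int}(\mathfrak{r})$. For the converse, note that $\mathbf{K}$ is a path-connected subgroup of the Lie group $\operatorname{Aut}(\mathfrak{r})$, so by Yamabe's theorem it is an analytic subgroup. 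Its Lie algebra contains each $\operatorname{ad}Y$ for $Y$ in the $\mathbb{R}$-basis of $\mathfrak{r}$, and therefore contains $\operatorname{ad}\mathfrak{r}$. An analytic subgroup is determined by its Lie algebra, so $\mathbf{K}=\operatorname{Int}(\mathfrak{r})$.

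\emph{Step 2: compactness.} By \cite[Proposition 30]{notes2}, $\mathfrak{r}$ is a compact real form of $\mathfrak{g}$, so its Killing form $B$ is negative definite. Then $\operatorname{Aut}(\mathfrak{r})\subseteq O(\mathfrak{r},-B)$ is a closed subgroup of a compact group. Its identity component is $\operatorname{Int}(\mathfrak{r})$, because the derivation algebra of the semisimple algebra $\mathfrak{r}$ is $\operatorname{ad}\mathfrak{r}$. So $\mathbf{K}$ is compact.

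To view $\mathbf{K}$ inside $\mathbf{G}$, extend each element $\mathbb{C}$-linearly to $\mathfrak{g}=\mathfrak{r}\otimes_{\mathbb{R}}\mathbb{C}$. Then $\exp(t\operatorname{ad}(u_X+u_{TX}))$ and the other generators become one-parameter subgroups of $\operatorname{Int}(\mathfrak{g})$. Since $\mathbf{G}=\operatorname{Int}(\mathfrak{g})$ over $\mathbb{C}$ (adjoint Chevalley group), this gives $\mathbf{K}\subseteq\mathbf{G}$.

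\emph{Step 3: maximality.} This is the main obstacle, and I would rely on the classical theory. Under the identification above, the Cartan involution $\theta$ conjugates $\mathbf{G}$ to itself, and $\mathbf{K}$ lies in the fixed group $\mathbf{G}^\theta$. Suppose $\mathbf{K}\subseteq\mathbf{L}$ with $\mathbf{L}$ compact. Averaging a Hermitian form over $\mathbf{L}$ shows that $\operatorname{Lie}(\mathbf{L})$ is a real form of a compact subalgebra of $\mathfrak{g}$ containing $\mathfrak{r}$. It is compact, so it meets $i\mathfrak{r}=\mathfrak{p}$ trivially, because $\operatorname{ad}$ of elements of $\mathfrak{p}$ has real, nonzero eigenvalues. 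Hence $\operatorname{Lie}(\mathbf{L})=\mathfrak{r}$.

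Next, the polar map $\mathbf{K}\times\mathfrak{p}\to\mathbf{G}$, $(k,Y)\mapsto k\exp(\operatorname{ad}Y)$, is a diffeomorphism. This is the global Cartan decomposition for complex semisimple $\operatorname{Int}(\mathfrak{g})$ with compact form $\mathfrak{r}$ \cite{Knapp}. Any $g\in\mathbf{L}$ writes as $k\exp(\operatorname{ad}Y)$, so $\exp(\operatorname{ad}Y)\in\mathbf{L}$. Its powers $\exp(n\operatorname{ad}Y)$ stay in a compact set only if $Y=0$, since the eigenvalues are positive reals. Thus $\mathbf{L}=\mathbf{K}$.

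The delicate point is justifying the polar decomposition intrinsically from the root-category basis. I would invoke the classical theorem after checking that $\theta$ is exactly the Cartan involution attached to $\mathfrak{r}$, which holds by construction.
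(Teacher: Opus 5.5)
Your argument is correct, and it does not depend on the paper's own Cartan decomposition theorem. The paper gives no proof of this statement; it quotes it from \cite[Proposition 36]{notes2}. So the only comparison available is with how the paper handles the neighbouring facts, and there your route differs in two ways.

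First, the embedding of $\mathbf{K}$ into $\mathbf{G}$. The paper uses the explicit factorizations of \cite[Lemma 31]{notes2}, such as $\exp(t\operatorname{ad}(u_X+u_{TX}))=E_{[X]}(\tan t)h_{[X]}((\cos t)^{-1})E_{[TX]}(\tan t)$, which write the generators of $\mathbf{R}$ as words in the Chevalley generators. You instead use the softer identity $\mathbf{G}=\operatorname{Int}(\mathfrak{g})$. The paper also proves that identity, by an argument that does not use this proposition.

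Second, the logical order is reversed. The paper feeds this proposition, via $\mathbf{K}=\operatorname{Int}(\mathfrak{r})=\operatorname{Aut}(\mathfrak{r})^{\circ}$, into the proof that $\mathbf{K}=\mathbf{G}^{\Theta}$. Only afterwards does it derive the global Cartan decomposition by hand from the form $(-,-)_{\theta}$. You use the Cartan decomposition as input to obtain maximality. That is legitimate only because you cite the classical theorem in \cite{Knapp} for $\operatorname{Int}(\mathfrak{g})$, with $\theta$ the conjugation with respect to the compact form $\mathfrak{r}$. Citing the paper's own Cartan decomposition theorem at that point would be circular.

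Two economies are available.

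\begin{itemize}
\item In Step 3 the computation $\operatorname{Lie}(\mathbf{L})=\mathfrak{r}$ is unnecessary. The phrase ``a real form of a compact subalgebra'' is also imprecise: averaging only shows that $\operatorname{ad}Z$ is skew-Hermitian for $Z\in\operatorname{Lie}(\mathbf{L})$. Writing $g\in\mathbf{L}$ as $k\exp(\operatorname{ad}Y)$ already gives $\exp(\operatorname{ad}Y)=k^{-1}g\in\mathbf{L}$, and boundedness of its powers forces $Y=0$. In fact \cite[Theorem 6.31]{Knapp}, the result that supplies your polar decomposition, also states that the analytic subgroup attached to a Cartan decomposition is maximal compact when the center is finite. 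So maximality is immediate once $\mathbf{K}$ is identified with the analytic subgroup of $\operatorname{Int}(\mathfrak{g})$ with Lie algebra $\mathfrak{r}$.

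\item In Step 1 you can avoid Yamabe's theorem. Take an $\mathbb{R}$-basis $Y_1,\dots,Y_N$ of $\mathfrak{r}$ drawn from the listed generators. The map $(t_1,\dots,t_N)\mapsto\prod_j\exp(t_j\operatorname{ad}Y_j)$ has invertible differential at $0$. Hence $\mathbf{K}$ contains a neighbourhood of $1$ in the connected group $\operatorname{Int}(\mathfrak{r})$, and therefore all of it.
\end{itemize}
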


We define an involution $\Theta$ on $\mathbf{G}$.
Let $\Theta:\mathbf{G}\rightarrow \mathbf{G}$ be a group automorphism, mapping $E_{[X]}(t)$ to $E_{[TX]}(\overline{t})$, for any $X\in \operatorname{ind}\mathcal{R}$ and $t\in \mathbb{C}$.
Here $\overline{t}$ means taking the conjugate of $t$.
It can be easily shown that $\Theta^2=1$, $\Theta(n_{[X]})=n_{[TX]}$, and $\Theta(h_{[X]}(t))=h_{[TX]}(\overline{t})$, for any $X\in \operatorname{ind}\mathcal{R}$ and $t\in \mathbb{C}$.
Moreover, for any $t\in \mathbb{C}$, since
\begin{align*}
     \Theta(\operatorname{exp}(t\operatorname{ad}u_X))&=\Theta(E_{[X]}(t))  =E_{[TX]}(\overline{t})=\operatorname{exp}(\overline{t}\operatorname{ad}u_{TX})=\operatorname{exp}(\operatorname{ad}\theta(tu_X)),\\
     \Theta(\operatorname{exp}(t\operatorname{ad}\frac{H_X}{d(X)}))&=\Theta(h_{[X]}(e^{-t}))=h_{[TX]}(e^{-\overline{t}})=\operatorname{exp}(\overline{t}\operatorname{ad}\frac{H_{TX}}{d(X)})=\operatorname{exp}(\operatorname{ad}\theta(t\frac{H_X}{d(X)})),
\end{align*}
we have $d\Theta=\theta$.

Denote the set of $\Theta$-fixed points in $\mathbf{G}$ by $\mathbf{G}^{\Theta}$.
Then the Lie algebra of $\mathbf{G}^{\Theta}$ is $\mathfrak{r}$.

\begin{lemma}
    \cite[Lemma 31]{notes2}
    For any $t\in \mathbb{R}, X\in \operatorname{ind}\mathcal{R}$, we have 
    \begin{align*}
       & \operatorname{exp}(t\operatorname{ad}(u_X+u_{TX}))=E_{[X]}(\operatorname{tan}t)h_{[X]}((\operatorname{cos}t)^{-1})E_{[TX]}(\operatorname{tan}t),\\
        & \operatorname{exp}(t\operatorname{ad}i(u_X-u_{TX}))=E_{[X]}(i\operatorname{tan}t)h_{[X]}((\operatorname{cos}t)^{-1})E_{[TX]}(-i\operatorname{tan}t).
    \end{align*}
\end{lemma}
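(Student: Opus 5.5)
The plan is to reduce both identities to a computation in $SL_2(\mathbb{C})$ through the rank-one subgroup attached to the pair $X, TX$. First I will record the structure of the subalgebra spanned by $u_X$, $u_{TX}$ and $H_X$. From the Peng--Xiao bracket in the root category, $[u_X,u_{TX}]$ is a nonzero integer multiple of $H_X/d(X)$, and $\operatorname{ad}H_X$ acts on $u_X, u_{TX}$ by $\pm (H_X|H_X)$. After normalizing, this gives an $\mathfrak{sl}_2$-triple $e=u_X$, $f'=u_{TX}$, $h$. The point to fix carefully is the sign convention: since $\mathfrak{r}$ is a \emph{compact} form, $u_{TX}$ must correspond to $-f$, i.e.\ to the matrix $\begin{pmatrix}0&0\\-1&0\end{pmatrix}$. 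I will verify this directly from the bracket $[u_X,u_{TX}]$, using that $(H_X|H_X)>0$.

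Next I will use the standard fact, already implicit in \cite{notes1}, that there is a homomorphism $\varphi:SL_2(\mathbb{C})\to\mathbf{G}$ with the following images:
\begin{align*}
\varphi\begin{pmatrix}1&a\\0&1\end{pmatrix}&=E_{[X]}(a),\\
\varphi\begin{pmatrix}1&0\\-a&1\end{pmatrix}&=E_{[TX]}(a),\\
\varphi\begin{pmatrix}c&0\\0&c^{-1}\end{pmatrix}&=h_{[X]}(c).
\end{align*}
Its differential maps $\begin{pmatrix}0&1\\-1&0\end{pmatrix}$ to $\operatorname{ad}(u_X+u_{TX})$, and $i\begin{pmatrix}0&1\\1&0\end{pmatrix}$ to $\operatorname{ad}\,i(u_X-u_{TX})$. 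If such a $\varphi$ is not available in the needed form, I will instead argue on the $\mathfrak{sl}_2$-submodules of $\mathfrak{g}$. Since $\mathfrak{g}$ decomposes under this triple into finite-dimensional $\mathfrak{sl}_2$-modules, it suffices to check the identity in each of them, and this is determined by the identity in $SL_2$.

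The core step is the matrix identity in $SL_2$. For the first identity, with $c=\cos t\neq0$ and $a=\tan t$, one has
\begin{align*}
\exp t\begin{pmatrix}0&1\\-1&0\end{pmatrix}=\begin{pmatrix}\cos t&\sin t\\-\sin t&\cos t\end{pmatrix}.
\end{align*}
A direct multiplication gives
\begin{align*}
\begin{pmatrix}1&a\\0&1\end{pmatrix}\begin{pmatrix}c^{-1}&0\\0&c\end{pmatrix}\begin{pmatrix}1&0\\-a&1\end{pmatrix}=\begin{pmatrix}c^{-1}-a^2c&ac\\-ac&c\end{pmatrix},
\end{align*}
and with these values of $a$ and $c$ the right-hand side equals the rotation matrix above. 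The second identity is the same computation with $a=i\tan t$ in the upper factor and $-i\tan t$ in the lower one. There one uses
\begin{align*}
\exp\left(ti\begin{pmatrix}0&1\\1&0\end{pmatrix}\right)=\begin{pmatrix}\cos t&i\sin t\\ i\sin t&\cos t\end{pmatrix},
\end{align*}
and checks that the product matches it, since $c^{-1}-a^2c=\cos t$ again because $a^2=-\tan^2 t$ is offset by the sign change in the lower factor. Applying $\varphi$ then yields both formulas, wherever $\cos t\neq0$, which is exactly where the right-hand sides are defined.

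I expect the main obstacle to be the sign and normalization bookkeeping. This means confirming that $[u_X,u_{TX}]$ and the definition of $h_{[X]}$, via $A_{XY}$, really match the embedding above with $u_{TX}\mapsto -f$ rather than $+f$. It also means checking the non-simply-laced case, where $d(X)$ and the factor $2/(H_X|H_X)$ enter. Once that is settled, the rest is the elementary $2\times2$ computation.
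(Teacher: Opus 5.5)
Your argument is correct. There is no in-paper proof to compare it with, because the paper quotes this lemma from \cite[Lemma 31]{notes2} without proof. Your reduction to $SL_2(\mathbb{C})$ is a complete proof on its own, and both $2\times 2$ identities check out wherever $\cos t\neq 0$.

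The points you left open all come out the way you expected.
\begin{itemize}
\item \textbf{Normalization.} From the paper's identity $\exp(t\operatorname{ad}\frac{H_X}{d(X)})=h_{[X]}(e^{-t})$ one reads off $[\frac{H_X}{d(X)},u_Y]=-A_{XY}u_Y$. Moreover $A_{XY}=(H_X|H_Y)/d(X)$, because $(H_X|H_X)=2d(X)$ for indecomposable $X$. So no extra normalization enters in the non-simply-laced case.
\item \textbf{Sign.} The bracket is $[u_X,u_{TX}]=\frac{H_X}{d(X)}$. This is Peng--Xiao's normalization, and its sign is also forced by invariance of $(,)$ together with $(u_X,u_{TX})<0$. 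Hence $(u_X,-\frac{H_X}{d(X)},-u_{TX})$ is a standard $\mathfrak{sl}_2$-triple with no rescaling, i.e.\ $u_{TX}\mapsto -f$.
\item \textbf{Cross-check.} Under your $\varphi$, Lemma \ref{xhy=yhx} becomes a valid identity in $SL_2$. With $u_{TX}\mapsto +f$ instead, the $(1,1)$-entries of its two sides would be $b-ac/b$ and $b+ac/b$, which differ.
\item \textbf{Existence of $\varphi$.} You do not need the module-by-module fallback. The restriction of $\operatorname{ad}$ to this $\mathfrak{sl}_2$ integrates to a representation $\varphi$ of the simply connected group $SL_2(\mathbb{C})$ on $\mathfrak{g}$. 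One has $\varphi(\operatorname{diag}(e^s,e^{-s}))=\exp(-s\operatorname{ad}\frac{H_X}{d(X)})=h_{[X]}(e^s)$ for all $s\in\mathbb{C}$, which also covers the case $\cos t<0$.
\item \textbf{Second computation.} Write it with two parameters. The element $E_{[X]}(a_1)h_{[X]}(c^{-1})E_{[TX]}(a_2)$ maps to $\begin{pmatrix}c^{-1}-a_1a_2c&a_1c\\-a_2c&c\end{pmatrix}$. The relevant quantity is $a_1a_2=(i\tan t)(-i\tan t)=\tan^2 t$. Your ``$c^{-1}-a^2c$ with $a^2=-\tan^2t$'', read literally, would give $c^{-1}+c\tan^2 t$, which is wrong.
\end{itemize}
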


Using the definition and the above lemma, we notice that $\mathbf{K}$ is contained in $\mathbf{G}^{\Theta}$.
We denote the identity component by $(-)^{\circ}$, and denote the Lie algebra of the Lie group by $\operatorname{Lie}(-)$.

\begin{proposition}
    The group $\mathbf{K}$ coincides with $\mathbf{G}^{\Theta}$.
\end{proposition}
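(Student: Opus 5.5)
We already know $\mathbf{K}\subseteq \mathbf{G}^{\Theta}$, so the work is the reverse inclusion. The plan has three steps: compare identity components via Lie algebras, show that $\mathbf{G}^{\Theta}$ is compact and hence equal to $\mathbf{K}$ by maximality, and finally check connectedness directly.

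\emph{Identity components.} Since $d\Theta=\theta$, the Lie algebra $\operatorname{Lie}(\mathbf{G}^{\Theta})$ is the $\theta$-fixed subalgebra $\mathfrak{r}$, as noted above. The generators of $\mathbf{K}$ are exponentials of the $\mathbb{R}$-basis of $\mathfrak{r}$, and $\mathbf{K}$ is connected, so $\operatorname{Lie}(\mathbf{K})=\mathfrak{r}$ as well. A connected Lie subgroup of $\mathbf{G}^{\Theta}$ with the full Lie algebra is the identity component, so $\mathbf{K}=(\mathbf{G}^{\Theta})^{\circ}$. It then remains to show that $\mathbf{G}^{\Theta}$ is connected.

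\emph{Compactness.} The fastest route is to show $\mathbf{G}^{\Theta}$ is compact; maximality of $\mathbf{K}$ (Proposition 36 of \cite{notes2}) then forces $\mathbf{G}^{\Theta}=\mathbf{K}$, since $\mathbf{K}\subseteq\mathbf{G}^{\Theta}$. For compactness, consider the Hermitian form $\langle x,y\rangle=-B(x,\theta y)$ on $\mathfrak{g}$, where $B$ is the Killing form. It is positive definite because $\mathfrak{r}$ is a compact real form (Proposition 30 of \cite{notes2}): $B$ is negative definite on $\mathfrak{r}$ and positive definite on $\mathfrak{p}=i\mathfrak{r}$. Every $g\in\mathbf{G}$ is an automorphism, so it preserves $B$. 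The identity to establish is $\Theta(g)=\theta g\theta^{-1}$ inside $\operatorname{Aut}(\mathfrak{g})$, where we regard $\operatorname{Aut}(\mathfrak{g})$ as a real Lie group so that conjugation by the conjugate-linear $\theta$ makes sense. It suffices to check this on the generators: $\theta\exp(t\operatorname{ad}u_X)\theta^{-1}=\exp(\operatorname{ad}\theta(tu_X))=E_{[TX]}(\overline{t})$.

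Granting this identity, if $g\in\mathbf{G}^{\Theta}$ then $g$ commutes with $\theta$. Together with $g$ preserving $B$, this shows $g$ preserves $\langle\,,\rangle$, so $\mathbf{G}^{\Theta}\subseteq U(\mathfrak{g},\langle\,,\rangle)\cap\mathbf{G}$. Here $\mathbf{G}$ is closed in $\operatorname{Aut}(\mathfrak{g})$, being the algebraic group of Proposition 4.42 of \cite{notes1}. Also, $\mathbf{G}^{\Theta}$ is closed, as the fixed points of a continuous map. A closed subgroup of a unitary group is compact, which finishes this step.

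\emph{Main obstacle.} The hard step is the identity $\Theta(g)=\theta g\theta^{-1}$. The statement that $\Theta$ is a well-defined automorphism was asserted only via generators, so it is cleanest to \emph{define} $\Theta$ as conjugation by $\theta$: this is automatically a well-defined automorphism of $\operatorname{Aut}(\mathfrak{g})$ that preserves $\mathbf{G}$ because it permutes the generators. If that identification is unavailable, I would instead argue connectedness directly. Any $g\in\mathbf{G}^{\Theta}$ has a Bruhat decomposition $g=u\,n\,b$, and $\Theta$ swaps $\mathbf{U}^{+}$ and $\mathbf{U}^{-}$. Using the $\mathbf{K}\mathbf{A}\mathbf{N}$-type factorization, one would then need to show that $g$ lies in $\mathbf{K}\cdot(\mathbf{G}^{\Theta}\cap\mathbf{H})$. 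One would further check that $\mathbf{G}^{\Theta}\cap\mathbf{H}=\{\prod h_{[S_i]}(t_i):|t_i|=1\}$ is a connected torus contained in $\mathbf{K}$, because each $h_{[S_i]}(e^{i s})$ equals $\exp(-is\operatorname{ad}H_{S_i}/d(S_i))$. This fallback is more laborious, so I would pursue the unitary-group argument first.
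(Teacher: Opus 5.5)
Your argument is correct, but it takes a different route from the paper. The paper argues through connectedness. It uses $\mathbf{K}=\operatorname{Int}(\mathfrak{r})=\operatorname{Aut}(\mathfrak{r})^{\circ}$ and $\operatorname{Lie}(\mathbf{G}^{\Theta})=\mathfrak{r}=\operatorname{Lie}(\mathbf{K})$. It then simply asserts that $\mathbf{G}^{\Theta}$ is a connected closed subgroup of $\operatorname{Aut}(\mathfrak{r})$, which forces it to equal the identity component $\mathbf{K}$.

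You replace connectedness by compactness. Identifying $\Theta$ with conjugation by the conjugate-linear $\theta$ places $\mathbf{G}^{\Theta}$ inside the unitary group of the positive definite Hermitian form $-B(x,\theta y)$. So $\mathbf{G}^{\Theta}$ is a compact subgroup containing $\mathbf{K}$, and the maximality of $\mathbf{K}$ from Proposition 36 of \cite{notes2} gives equality.

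The trade-off is as follows. The paper's key input, connectedness of $\mathbf{G}^{\Theta}$, is stated without proof. Classically it comes from the polar decomposition $\mathbf{K}\times\mathfrak{p}\to\mathbf{G}$, but the paper proves that afterwards and uses this very proposition in doing so. Your route proves the property it relies on, at the price of depending on the maximality result from \cite{notes2} rather than only on Lie-algebra data. Your check of $\Theta(g)=\theta g\theta^{-1}$ on generators also settles the well-definedness of $\Theta$, which the paper only prescribes on generators. The paper's own $\mathbf{G}$-invariant form $(-,-)_{\theta}$, introduced right after this proposition, would serve equally well in place of the Killing form.

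Once compactness and maximality are in hand, your first step ($\mathbf{K}=(\mathbf{G}^{\Theta})^{\circ}$) and the Bruhat-decomposition fallback are unnecessary.
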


\begin{proof}
    The proposition follows easily from the facts that $\mathbf{K}=\operatorname{Int}(\mathfrak{r})=\operatorname{Aut}(\mathfrak{r})^{\circ}$, $\operatorname{Lie}(\mathbf{G}^{\Theta})=\mathfrak{r}=\operatorname{Lie}(\mathbf{K})$, and $\mathbf{G}^{\Theta}$ is a connected, closed subgroup of $\operatorname{Aut}(\mathfrak{r})$.
\end{proof}

Recall the definition of an invariant symmetric bilinear form on $\mathfrak{g}$ arising from $\mathcal{R}$.

\begin{definition}\cite[Definition 29]{notes2}\label{(,)}
Let $(,):\mathfrak{g} \times \mathfrak{g} \rightarrow \mathbb{C}$ be a non-degenerate symmetric bilinear form on $\mathfrak{g}$, satisfying the following properties:

(1) For any $x,y,z\in \mathfrak{g}$, we have 
\begin{align*}
    ([x,y],z)=(x,[y,z]).
\end{align*}
    
(2) For any $X,Y\in \mathcal{R}$, we have 
\begin{align*}
    (H_X,H_Y)=\sum_{Z\in \operatorname{ind}\mathcal{R}}(H_X|H_Z)(H_Y|H_Z),
\end{align*}
where $(-|-)$ is the symmetric Euler form on $\mathcal{K}$.

(3) For any $X,Y\in \operatorname{ind}\mathcal{R}$, $(H_X,u_Y)=0$, i.e. $(\mathcal{K},\mathcal{N})=0$.

(4) For any $X,Y\in \operatorname{ind}\mathcal{R}$, if $X\ncong TY$, then $(u_X,u_Y)=0$. 
If $X\cong TY$, then 
\begin{align*}
    (u_X,u_{TX})= -4+\sum_{Y\in\operatorname{ind}\mathcal{R}}(\sum_{L\in\operatorname{ind}\mathcal{R}} \gamma_{TX,Y}^L\gamma_{X,L}^Y),
\end{align*}
where integers $\gamma_{MN}^P$ are evaluations of the Hall polynomials and are used to define the Lie bracket of $\mathfrak{g}$ by Peng and Xiao. 
\end{definition}

\begin{lemma}
    For any $x\in \mathbf{G}$ and $Y,Z\in \mathfrak{g}$, we have 
    \begin{align*}
        (xY,xZ)=(Y,Z).
    \end{align*}
\end{lemma}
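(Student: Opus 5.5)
Since $\mathbf{G}$ is generated by the elements $E_{[X]}(t)$, $X\in \operatorname{ind}\mathcal{R}$, $t\in\mathbb{C}$, and the set of $x\in \operatorname{Aut}(\mathfrak{g})$ preserving $(,)$ is a subgroup, it suffices to prove $(E_{[X]}(t)Y,E_{[X]}(t)Z)=(Y,Z)$ for each generator. Writing $E_{[X]}(t)=\operatorname{exp}(t\operatorname{ad}u_X)$, the standard argument applies. Property (1) of Definition \ref{(,)}, together with symmetry, shows that $D=\operatorname{ad}u_X$ is skew with respect to $(,)$:
\begin{align*}
    (DY,Z)=([u_X,Y],Z)=-([Y,u_X],Z)=-(Y,[u_X,Z])=-(Y,DZ).
\end{align*}
By induction, $(D^aY,D^bZ)=(-1)^a(Y,D^{a+b}Z)$.

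Expanding both exponentials, which are finite sums because $\operatorname{ad}u_X$ is nilpotent on $\mathfrak{g}$ (the $\mathcal{N}$-grading by roots shifts degree by a nonzero root), we get
\begin{align*}
    (e^{tD}Y,e^{tD}Z)=\sum_{m\geq 0}\frac{t^m}{m!}\sum_{a+b=m}\binom{m}{a}(-1)^a(Y,D^mZ).
\end{align*}
The inner sum vanishes for $m\geq 1$, leaving $(Y,Z)$. Alternatively, one can view $f(t)=(e^{tD}Y,e^{tD}Z)$ as a polynomial in $t$ and note that $f'(t)=(De^{tD}Y,e^{tD}Z)+(e^{tD}Y,De^{tD}Z)=0$, so $f(t)=f(0)$.

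The only genuine input is the invariance (1). I expect no real obstacle. The one point to check is that $E_{[X]}(t)$ acts on $\mathfrak{g}=\mathfrak{g}_{\mathbb{C}}$ literally as $\operatorname{exp}(t\operatorname{ad}u_X)$, and that $(,)$ is complex bilinear, so the identity holds for all complex $t$. Both hold by the definitions recalled earlier.

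As a sanity check, one could also verify directly that $h_{[X]}(t)$ preserves $(,)$. By (3) and (4), the only nonzero pairings among basis vectors are $(H,H')$ and $(u_Y,u_{TY})$, and $h_{[X]}(t)$ scales $u_Y$ and $u_{TY}$ by $t^{A_{XY}}$ and $t^{A_{X,TY}}=t^{-A_{XY}}$, which cancel. This check is not needed, since $h_{[X]}(t)\in\mathbf{G}$.
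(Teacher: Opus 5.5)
Your proposal is correct and follows essentially the same route as the paper: reduce to the generators $E_{[X]}(t)=\operatorname{exp}(t\operatorname{ad}u_X)$, then use invariance (1) to show that $\operatorname{ad}u_X$ is skew with respect to $(,)$. The only difference is in the last step, where the paper writes $(E_{[X]}(t)Y,Z)=(Y,E_{[X]}(-t)Z)$ directly instead of using your binomial cancellation or derivative argument, and this difference is cosmetic.
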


\begin{proof}
    It suffices to prove for generators $E_{[X]}(t)$ of the Chevalley group.
    It follows from Definition \ref{(,)}(1) that $((\operatorname{ad}y)(x),z)=-(x,(\operatorname{ad}y)(z))$, for any $x,y,z\in \mathfrak{g}$.
    Now for $X\in \operatorname{ind}\mathcal{R},t\in \mathbb{C},Y,Z\in \mathfrak{g}$, we have 
    \begin{align*}
        (E_{[X]}(t)Y,Z)&=((1+t\operatorname{ad}u_X+\frac{t^2}{2!}(\operatorname{ad}u_X)^2+\cdots)Y,Z)\\
       & =(Y,(1-t\operatorname{ad}u_X+\frac{(-t)^2}{2!}(\operatorname{ad}u_X)^2+\cdots)Z)=(Y,E_{[X]}(-t)Z),
    \end{align*}
    and the lemma is proved.
\end{proof}

By definition, it is easy to check that $(\theta (X),\theta (Y))=(X,Y)$ for any $X,Y\in \mathfrak{g}$.

Now we define a bilinear form $(-,-)_{\theta}$ on $\mathfrak{g}$ by 
\begin{align*}
    (X,Y)_{\theta}=-(X,\theta (Y)),
\end{align*}
for any $X,Y\in \mathfrak{g}$.
Then 
\begin{align*}
    (Y,X)_{\theta}=-(Y,\theta(X))=-(\theta(Y),X)=-(X,\theta (Y))=(X,Y)_{\theta}
\end{align*}
and $(-,-)_{\theta}$ is symmetric.

\begin{lemma}
    The symmetric bilinear form $(-,-)_{\theta}$ is positive definite.
\end{lemma}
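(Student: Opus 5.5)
The cleanest route uses the compact real form $\mathfrak{r}$ rather than a term-by-term computation. First I would note that $\theta$ is conjugate-linear and fixes $\mathfrak{r}$ pointwise, so for $x=a+ib$ with $a,b\in\mathfrak{r}$ we get $\theta(x)=a-ib$. Hence
\begin{align*}
(x,x)_{\theta}=-(a+ib,\,a-ib)=-(a,a)-(b,b)+i\bigl((a,b)-(b,a)\bigr)=-(a,a)-(b,b),
\end{align*}
using only symmetry of $(,)$. So the lemma reduces to showing that the restriction of $(,)$ to $\mathfrak{r}$ is real-valued and negative definite. Then $(x,x)_{\theta}>0$ unless $a=b=0$.

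For real-valuedness, I would check the $\mathbb{Z}$-basis $\{iH_X/d(X)\}\cup\{u_X+u_{TX},\,i(u_X-u_{TX})\}$ of $\mathfrak{r}$ against Definition \ref{(,)}. Products of $H$'s are integers, multiplied by $i\cdot i=-1$. The pairing $(u_X+u_{TX},u_Y+u_{TY})$ is an integer combination of the values $(u_X,u_{TX})$. The pairing of $u_X+u_{TX}$ with $i(u_Y-u_{TY})$ equals $i\bigl((u_{TX},u_X)-(u_X,u_{TX})\bigr)=0$. Finally, $(i(u_X-u_{TX}),i(u_X-u_{TX}))=2(u_X,u_{TX})$. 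So all values are real.

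For definiteness, I would use Proposition 30: $\mathfrak{r}$ is a compact real form of the simple Lie algebra $\mathfrak{g}$. Since $(,)$ is nondegenerate and invariant (Definition \ref{(,)}(1)), and $\mathfrak{r}$ is simple, the restriction of $(,)$ to $\mathfrak{r}$ is a real multiple $c\neq 0$ of the Killing form of $\mathfrak{r}$, which is negative definite. One could alternatively argue that $\mathbf{K}$ is compact and preserves $(,)$ by the lemma on $\mathbf{G}$-invariance; averaging then gives an invariant definite form, and uniqueness up to scalar yields the same conclusion. The sign of $c$ is fixed by a single test vector. For $h=\sum a_X H_X$ with $a_X$ real and not all zero, Definition \ref{(,)}(2) gives
\begin{align*}
(ih,ih)=-(h,h)=-\sum_{Z}(h|H_Z)^2<0,
\end{align*}
where the sum is nonzero because the symmetric Euler form is nondegenerate (it is given by the positive definite symmetrized Cartan matrix). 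Hence $c>0$ and $(,)|_{\mathfrak{r}}$ is negative definite.

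The main obstacle is justifying that an invariant form on the simple algebra $\mathfrak{r}$ is a multiple of the Killing form. This holds because $\mathfrak{r}\otimes\mathbb{C}=\mathfrak{g}$ is simple, so $\mathfrak{r}$ is absolutely simple, and the usual Schur-lemma argument on $\mathfrak{g}$ applies. If this route were problematic, I would fall back on a direct computation. Expanding $x=h+\sum c_Xu_X$ and using $\theta(h)=-\bar h$ (because $H_X+H_{TX}=0$) together with Definition \ref{(,)}(3),(4), one gets
\begin{align*}
(x,x)_\theta=(h,\bar h)-\sum_X|c_X|^2(u_X,u_{TX}).
\end{align*}
It then remains to show $(u_X,u_{TX})<0$. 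I would do this via invariance, $([u_X,u_{TX}],H_X)=(u_X,[u_{TX},H_X])$, together with Peng–Xiao's formula for $[u_X,u_{TX}]$ as a multiple of $H_X$.
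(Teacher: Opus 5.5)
Your proof is correct, but your main argument differs from the paper's.

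\textbf{The paper's route.} The paper works directly with the $\mathbb{C}$-basis $\{H_{S_i}\}\cup\{u_X\}$. By properties (3) and (4) of $(,)$ and $\theta(u_Y)=u_{TY}$, this basis is orthogonal for $(-,-)_\theta$. On the Cartan part the form is the Gram form $\sum_Z(H_X|H_Z)(H_Y|H_Z)$. The one nontrivial input, $(u_X,u_{TX})<0$, is imported from inside the proof of Proposition 30 of \cite{notes2}, via properties of the $\gamma$'s.

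\textbf{Your fallback.} This is essentially the paper's route. Your idea of getting the sign from invariance instead of from the $\gamma$'s works and is cleaner. Peng--Xiao's normalization is $[u_X,u_{TX}]=H_X/d(X)$ and $[H_M,u_Y]=-(H_M|H_Y)u_Y$; the second is consistent with the paper's identity $\exp(t\operatorname{ad}\frac{H_X}{d(X)})=h_{[X]}(e^{-t})$. With these, your invariance identity gives $(u_X,u_{TX})=-(H_X,H_X)/(2d(X)^2)$. For example this is $-4$ in type $A_1$, which matches property (4).

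\textbf{Your main route.} You split $x=a+ib$ with $a,b\in\mathfrak{r}$ and reduce positivity to negative definiteness of $(,)|_{\mathfrak{r}}$. You obtain that from only the \emph{statement} of Proposition 30, plus uniqueness of invariant forms on the simple algebra $\mathfrak{g}$, fixing the sign of the scalar on $ih$ for nonzero $h\in\mathfrak{a}$. This is the standard Knapp-style argument.

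\textbf{What each buys.} Your route avoids the internals of the earlier proof and identifies $(,)$ as a positive multiple of the Killing form. It also handles the conjugate-linearity of $\theta$ transparently. Since $(\theta x,\theta y)=\overline{(x,y)}$, the form $(-,-)_\theta$ is Hermitian rather than literally symmetric. The paper's orthogonal-basis check remains valid for a Hermitian form, but your identity $(x,x)_\theta=-(a,a)-(b,b)$ makes the realness of the quadratic form explicit. The price is that you need simplicity of $\mathfrak{g}$; in the semisimple case you would fix the sign on each simple ideal separately. You also do not get the explicit orthogonal basis and norms $(u_X,u_X)_\theta=-(u_X,u_{TX})$ that the paper's computation exhibits.
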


\begin{proof}
    By definition, we notice that $(-,-)_{\theta}$ coincides with $(-,-)$ when restrict to $\mathcal{K}$, and $(\mathcal{K},\mathcal{N})_{\theta}=0$.
    For any $X,Y\in \operatorname{ind}\mathcal{R}$, if $X\ncong Y$, then 
    \begin{align*}
        (u_X,u_Y)_{\theta}=-(u_X,u_{TY})=0.
    \end{align*}
    Otherwise,
    \begin{align*}
        (u_X,u_X)_{\theta}=-(u_X,u_{TX})>0,
    \end{align*}
    which is shown in the proof of \cite[Proposition 30]{notes2} by properties of $\gamma$.
    Thus $(-,-)_{\theta}$ is positive definite.
\end{proof}

We regard $\mathfrak{g}$ as a real Lie algebra, and then $(-,-)_{\theta}$ is an inner product on it.

\begin{lemma}
    The Chevalley group $\mathbf{G}$ coincides with $\operatorname{Int}(\mathfrak{g})$.
\end{lemma}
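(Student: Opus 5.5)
We prove the two inclusions $\mathbf{G}\subseteq \operatorname{Int}(\mathfrak{g})$ and $\operatorname{Int}(\mathfrak{g})\subseteq \mathbf{G}$. Recall that $\operatorname{Int}(\mathfrak{g})$ is the connected Lie subgroup of $\operatorname{Aut}(\mathfrak{g})$ whose Lie algebra is $\operatorname{ad}\mathfrak{g}$. It is generated by $\operatorname{exp}(\operatorname{ad}x)$ for $x\in\mathfrak{g}$. Since $\operatorname{exp}$ is a local diffeomorphism near $0$ and $\operatorname{Int}(\mathfrak{g})$ is connected, it is even generated by $\operatorname{exp}(\operatorname{ad}x)$ for $x$ running over a neighbourhood of $0$, or over the one-parameter subgroups of an $\mathbb{R}$-basis of $\mathfrak{g}$.

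For $\mathbf{G}\subseteq \operatorname{Int}(\mathfrak{g})$: each generator $E_{[X]}(t)=\operatorname{exp}(t\operatorname{ad}u_X)$ with $t\in\mathbb{C}$ is by definition $\operatorname{exp}(\operatorname{ad}(tu_X))$, hence lies in $\operatorname{Int}(\mathfrak{g})$. Since $\mathbf{G}$ is generated by these elements, the inclusion follows.

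For $\operatorname{Int}(\mathfrak{g})\subseteq \mathbf{G}$, I would show that $\mathbf{G}$ contains $\operatorname{exp}(s\operatorname{ad}x)$ for every $s\in\mathbb{R}$ and every $x$ in an $\mathbb{R}$-basis of $\mathfrak{g}$. Take the real basis $\{u_X, iu_X\}_{X\in\operatorname{ind}\mathcal{R}}\cup\{H_S/d(S), iH_S/d(S)\}_{S\in\Delta}$. The elements $\operatorname{exp}(s\operatorname{ad}u_X)=E_{[X]}(s)$ and $\operatorname{exp}(s\operatorname{ad}iu_X)=E_{[X]}(is)$ are generators of $\mathbf{G}$. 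For the Cartan part, the computation in the text preceding the definition of $\Theta$ gives $\operatorname{exp}(t\operatorname{ad}\frac{H_X}{d(X)})=h_{[X]}(e^{-t})$ for complex $t$. Equivalently, one checks on the basis vectors $u_Y$ that $\operatorname{ad}\frac{H_X}{d(X)}$ acts on $u_Y$ by a scalar matching $-A_{XY}$, up to the normalization fixed in \cite{notes2}, and kills $\mathcal{K}$. Taking $t=s$ and $t=is$ with $s$ real, both one-parameter groups lie in $\mathbf{H}\subseteq\mathbf{G}$, because $h_{[X]}(t)\in\mathbf{G}$. Thus $\mathbf{G}$ contains all one-parameter subgroups along a basis of $\operatorname{ad}\mathfrak{g}$.

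Since the products $\operatorname{exp}(s_1\operatorname{ad}x_1)\cdots\operatorname{exp}(s_m\operatorname{ad}x_m)$ cover a neighbourhood of the identity in $\operatorname{Int}(\mathfrak{g})$, by the inverse function theorem (the differential at $0$ is an isomorphism onto $\operatorname{ad}\mathfrak{g}$), and a connected Lie group is generated by any neighbourhood of the identity, we get $\operatorname{Int}(\mathfrak{g})\subseteq\mathbf{G}$.

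The main obstacle is the bookkeeping for the Cartan part: verifying that the exponent in $h_{[X]}(e^{-t})$ matches $\operatorname{ad}(H_X/d(X))$ on every $u_Y$, i.e. $[H_X,u_Y]=-\frac{2(H_X|H_Y)}{(H_X|H_X)}d(X)u_Y$ with $(H_X|H_X)=2d(X)$. I would cite the Peng--Xiao bracket convention and \cite{notes1} for this rather than recompute it. Alternatively, since $\operatorname{Int}(\mathfrak{g})$ is generated by root one-parameter subgroups alone (their Lie algebras generate $\mathfrak{g}$), one can bypass $\mathbf{H}$ entirely. The $\mathbb{C}$-span of the $u_X$ generates $\mathfrak{g}$ as a Lie algebra, because $[u_X,u_{TX}]$ is a nonzero multiple of $H_X$. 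So the subgroup generated by the $E_{[X]}(t)$ is a connected Lie subgroup whose Lie algebra contains $\operatorname{ad}\mathfrak{g}$, hence equals $\operatorname{Int}(\mathfrak{g})$. I would present this second argument as the main one.
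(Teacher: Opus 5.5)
Your proposal is correct, but it proves the reverse inclusion differently from the paper. The paper obtains $\mathbf{G}\subseteq\operatorname{Int}(\mathfrak{g})$ exactly as you do. For the other direction it imports from \cite{notes1} that $\mathbf{G}$ is a linear algebraic group with $\operatorname{Lie}(\mathbf{G})=\mathfrak{g}$. Hence $\mathbf{G}^{\circ}=\operatorname{Int}(\mathfrak{g})$, and the sandwich $\mathbf{G}\subseteq\operatorname{Int}(\mathfrak{g})=\mathbf{G}^{\circ}\subseteq\mathbf{G}$ finishes.

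You instead show $\operatorname{Int}(\mathfrak{g})\subseteq\mathbf{G}$ by a generation argument that never uses closedness or algebraicity of $\mathbf{G}$.

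\begin{itemize}
\item Your first variant (inverse function theorem applied to products of one-parameter subgroups along a real basis) is the more elementary one. It is already complete from facts stated in the paper: $h_{[X]}(t)\in\mathbf{G}$, and the displayed identity $\exp(t\operatorname{ad}\frac{H_X}{d(X)})=h_{[X]}(e^{-t})$. So the normalization check you flag as the main obstacle is not actually needed.
\item Your second variant rests on a nontrivial theorem, which you should cite explicitly: a subgroup generated by connected analytic subgroups is the analytic subgroup whose Lie algebra is the subalgebra they generate (Yamabe, or Bourbaki, Lie III). Its other input needs no Peng--Xiao bookkeeping. Since $H_{TX}=-H_X$ in $\mathcal{K}$, $u_{TX}$ lies in the root space opposite to that of $u_X$, and $[\mathfrak{g}_\alpha,\mathfrak{g}_{-\alpha}]\neq 0$ in any semisimple Lie algebra.
\end{itemize}

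What your route buys is independence from the algebraic-group structure of $\mathbf{G}$. Connectedness and closedness of $\mathbf{G}$ (as the identity component of $\operatorname{Aut}(\mathfrak{g})$) then come out as consequences rather than inputs. The paper's route is two lines, but only because that structure was established in \cite{notes1}. It also tacitly uses closedness of $\mathbf{G}$ to make sense of $\operatorname{Lie}(\mathbf{G})$ and $\mathbf{G}^{\circ}$ in the Lie group topology.
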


\begin{proof}
    Since $\operatorname{Int}(\mathfrak{g})$ is generated by $\operatorname{exp}(\operatorname{ad}X), X\in \mathfrak{g}$, $\mathbf{G}$ is contained in $\operatorname{Int}(\mathfrak{g})$.
    On the other hand, the Lie algebras $\operatorname{Lie}(\mathbf{G})=\mathfrak{g}=\operatorname{Lie}(\operatorname{Int}(\mathfrak{g}))$, so $\mathbf{G}^{\circ}=\operatorname{Int}(\mathfrak{g})^{\circ}=\operatorname{Int}(\mathfrak{g})$, and the lemma follows.
\end{proof}

\begin{theorem}
    The map $\mathbf{K}\times \mathfrak{p}\rightarrow \mathbf{G}, (k,X)\mapsto k\operatorname{exp}(\operatorname{ad}X)$ is a diffeomorphism.
\end{theorem}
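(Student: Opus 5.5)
The plan is to follow the classical polar decomposition argument, using the inner product $(-,-)_{\theta}$ on the real vector space $\mathfrak{g}$ to realize $\mathbf{G}$ inside $GL(\mathfrak{g})$.

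\emph{Adjoint properties.} First I compute adjoints with respect to $(-,-)_{\theta}$.
For $Y\in\mathfrak{r}$ we have
\begin{align*}
(\operatorname{ad}Y\, Z,W)_{\theta}=-([Y,Z],\theta W)=(Z,[Y,\theta W])=(Z,\theta[Y,W])\cdot(-1)\cdot(-1)^{0},
\end{align*}
so $\operatorname{ad}Y$ is skew-adjoint.
For $X\in\mathfrak{p}$ the same computation, using $\theta X=-X$, shows that $\operatorname{ad}X$ is self-adjoint. Consequently $\exp(\operatorname{ad}X)$ is positive definite symmetric for $X\in\mathfrak{p}$.
Every $k\in\mathbf{K}=\mathbf{G}^{\Theta}$ is orthogonal. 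Indeed, $k$ preserves $(-,-)$ by the invariance lemma, and $k$ commutes with $\theta$ because $d\Theta=\theta$ and $\Theta(k)=k$. Hence
\begin{align*}
(kY,kZ)_{\theta}=-(kY,\theta kZ)=-(kY,k\theta Z)=(Y,Z)_{\theta}.
\end{align*}
More generally, for $g\in\mathbf{G}$ the adjoint is $g^{*}=\theta g^{-1}\theta=\Theta(g)^{-1}$, and this lies in $\mathbf{G}$. So $\mathbf{G}$ is a closed subgroup of $GL(\mathfrak{g})$ stable under transpose; it is closed because $\mathbf{G}=\operatorname{Int}(\mathfrak{g})$, which is the identity component of $\operatorname{Aut}(\mathfrak{g})$.

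\emph{Injectivity and surjectivity.} Suppose $k\exp(\operatorname{ad}X)=k'\exp(\operatorname{ad}X')$. Applying $g\mapsto g^{*}g$ gives $\exp(2\operatorname{ad}X)=\exp(2\operatorname{ad}X')$. Both $\operatorname{ad}X$ and $\operatorname{ad}X'$ are symmetric, and $\exp$ is injective on symmetric operators, so $\operatorname{ad}X=\operatorname{ad}X'$. Since $\mathfrak{g}$ is semisimple, $\operatorname{ad}$ is injective, so $X=X'$ and then $k=k'$.

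For surjectivity, take $g\in\mathbf{G}$ and form $p=g^{*}g=\Theta(g)^{-1}g\in\mathbf{G}$, which is positive definite. Let $s=\log p$ be its symmetric logarithm. The key claim is that $s=\operatorname{ad}(2X)$ for some $X\in\mathfrak{p}$, so that $p^{1/2}=\exp(\operatorname{ad}X)\in\mathbf{G}$. Then $k:=g\exp(-\operatorname{ad}X)$ satisfies $k^{*}k=1$. It remains to see that $k$ is $\Theta$-fixed: since $k^{*}=\Theta(k)^{-1}$, the relation $k^{*}k=1$ gives $\Theta(k)=k$, so $k\in\mathbf{G}^{\Theta}=\mathbf{K}$.

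\emph{Main obstacle: the key claim.} This is the standard lemma that if $p$ is positive definite and $p^{n}\in\mathbf{G}$ for all integers $n$, then $p^{t}\in\mathbf{G}$ for all real $t$. I would first check that $\mathbf{G}=\operatorname{Int}(\mathfrak{g})$ is the identity component of the real algebraic group $\operatorname{Aut}(\mathfrak{g})$. Next, diagonalize $p$ in an orthonormal basis of $\mathfrak{g}$ (as a real space), with eigenvalues $\lambda_{j}>0$. The polynomial equations defining $\operatorname{Aut}(\mathfrak{g})$, which say that $p^{t}$ preserves the bracket structure constants, become exponential-polynomial identities in $t$. These identities vanish at all integers $t$, hence vanish identically, so $p^{t}\in\operatorname{Aut}(\mathfrak{g})$ for all $t\in\mathbb{R}$. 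Because $t\mapsto p^{t}$ is a continuous path from the identity, $p^{t}$ in fact lies in the identity component $\mathbf{G}$. Differentiating at $t=0$ then gives $s\in\operatorname{Der}(\mathfrak{g})=\operatorname{ad}\mathfrak{g}$, so $s=\operatorname{ad}Z$ for some $Z\in\mathfrak{g}$. Finally, $s$ is self-adjoint and skew-adjointness characterizes $\operatorname{ad}\mathfrak{r}$, so $Z\in\mathfrak{p}$.

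\emph{Smoothness.} The map is a smooth bijection. Its inverse is $g\mapsto\bigl(g\exp(-\tfrac12\log(g^{*}g)),\ \tfrac12\operatorname{ad}^{-1}\log(g^{*}g)\bigr)$. This is smooth because $\log$ is a diffeomorphism from positive definite symmetric operators onto symmetric operators, and $\operatorname{ad}^{-1}$ is linear on $\operatorname{ad}\mathfrak{p}$.
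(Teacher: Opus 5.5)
Your proposal is correct and follows essentially the same route as the paper: you take the polar decomposition of $\Theta(g)^{-1}g$, which is positive definite for $(-,-)_{\theta}$, pass to its real powers or logarithm to land in $\exp(\operatorname{ad}\mathfrak{p})$, and set $k=g\exp(-\operatorname{ad}X)\in\mathbf{G}^{\Theta}=\mathbf{K}$. Your version is in places more complete than the paper's: you justify $p^{t}\in\operatorname{Aut}(\mathfrak{g})$, which the paper only asserts, you get $X\in\mathfrak{p}$ from self-adjointness rather than from $\Theta(y)=y^{-1}$, and you write down the smooth inverse instead of citing Knapp. One small slip: the stray factor $(-1)\cdot(-1)^{0}$ in your skew-adjointness display should go, and that step should simply read $(Z,\theta[Y,W])=-(Z,[Y,W])_{\theta}$.
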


\begin{proof}
    We first show that this map is surjective.
    For any $x\in \mathbf{G}$, we consider the element $y=\Theta(x)^{-1}x$.
    Then $\Theta(y)=y^{-1}$.
    Moreover, for any $X,Y,Z\in \mathfrak{g}$, we have 
    \begin{align*}
        (yX,Y)_{\theta}&=-(yX,\theta(Y))=-(X,y^{-1}\theta(Y))\\
        &=-(X,\Theta(y)\theta(Y))=-(X,\theta(yY))=(X,yY)_{\theta},
    \end{align*}
and
\begin{align*}
    (yZ,Z)_{\theta}=-(\Theta(x)^{-1}xZ,\theta(Z))=-(xZ,\theta(xZ))=(xZ,xZ)_{\theta}\geq 0.
\end{align*}
Here $ (yZ,Z)_{\theta}=0$ if and only if $xZ=0$, if and only if $Z=0$.
As a result, the element $y$, viewed as an automorphism of $\mathfrak{g}$, is diagonalizable and all its eigenvalues are positive real numbers.
We choose a basis of $\mathfrak{g}$ made of eigenvectors of $y$ and denote the eigenvalues of $y$ by $\lambda_1,\cdots,\lambda_m$.
With respect to this basis, $y$ becomes a diagonal matrix $\operatorname{diag}\{\lambda_1,\cdots,\lambda_m\}$.
For any $t\in \mathbb{R}$, define $y^t=\operatorname{diag}\{\lambda_1^t,\cdots,\lambda_m^t\}$, which is an automorphism of $\mathfrak{g}$.
The elements $\{y^t,t\in \mathbb{R}\}$ form a one-parameter subgroup of $\operatorname{Aut}(\mathfrak{g})$, and thus contained in $\operatorname{Aut}(\mathfrak{g})^{\circ}=\operatorname{Int}(\mathfrak{g})=\mathbf{G}$.
So $\{y^t,t\in \mathbb{R}\}$ is a one-parameter subgroup of $\mathbf{G}$, and there exists some $X\in \mathfrak{g}$ such that $y^t=\operatorname{exp}(t\operatorname{ad}X)$ for any $t\in \mathbb{R}$.

Consider the one-parameter subgroups $\{\Theta(\operatorname{exp}(t\operatorname{ad}X)),t\in \mathbb{R}\}$ and $\{\operatorname{exp}(-t\operatorname{ad}X)$, $t\in \mathbb{R}\}$. 
Since $\Theta(y)=y^{-1}$, we have $\Theta(\operatorname{exp}(\operatorname{ad}X))=\operatorname{exp}(-\operatorname{ad}X)$.
That is, these two one-parameter subgroups coincides at $t=1$, and thus are equal.
By taking differential at $t=0$, we obtain that $\theta(X)=-X$ and $X\in \mathfrak{p}$.

Let $k=x\operatorname{exp}(-\frac{1}{2}\operatorname{ad}X)$.
Then 
\begin{align*}
    \Theta(k)^{-1}k&=\operatorname{exp}(-\frac{1}{2}\operatorname{ad}X) \Theta(x)^{-1}x \operatorname{exp}(-\frac{1}{2}\operatorname{ad}X)\\
    &=\operatorname{exp}(-\frac{1}{2}\operatorname{ad}X)\operatorname{exp}(\operatorname{ad}X)\operatorname{exp}(-\frac{1}{2}\operatorname{ad}X)=1.
\end{align*}
Thus $\Theta(k)=k$ and $k\in \mathbf{K}$.
So $x=k\operatorname{exp}(\frac{1}{2}\operatorname{ad}X)$ is the image of $(k,\frac{1}{2}X)\in \mathbf{K}\times \mathfrak{p}$ and the map is surjective.

Now we show that the map is injective.
By the previous argument, any $x\in \mathbf{G}$ can be written of the form $k\operatorname{exp}(\operatorname{ad}X)$ for some $k\in \mathbf{K}$ and $X\in \mathfrak{p}$. 
Repeating the same process as above, the eigenvectors of $\Theta(x)^{-1}x$ corresponding to the eigenvalues $\lambda_1,\cdots,\lambda_m$ form a basis of $\mathfrak{g}$.
Then with respect to this basis,
\begin{align*}
   X=\operatorname{diag}\{\operatorname{ln}\lambda_1,\cdots,\operatorname{ln}\lambda_m\}
\end{align*}
 is uniquely determined.
Thus $k$ is also uniquely determined and the map is injective.

Obviously the map is smooth. For the smoothness of its inverse map, we refer the proof of \cite[Theorem 6.31]{Knapp} for details.
The proof is finished.
\end{proof}

The diffeomorphism in the above proposition is called the Cartan decomposition of $\mathbf{G}$, and $\Theta$ is called the Cartan involution of $\mathbf{G}$.

\subsection{Iwasawa Decomposition}
In this subsection, we study the Iwasawa decomposition for the Lie algebra $\mathfrak{g}$ and the Chevalley group $\mathbf{G}$.

Let $\mathfrak{a}$ be the $\mathbb{R}$-span of $H_X,X\in \mathcal{R}$. 
Then $\mathfrak{a}$ is a maximal abelian subspace of $\mathfrak{p}$.

\begin{remark}
    For $\lambda\in\mathfrak{a}^*$, let 
    \begin{align*}
        \mathfrak{g}_{\lambda}=\{X\in \mathfrak{g}|[H,X]=\lambda(H)X, \forall H\in \mathfrak{a}  \}.
    \end{align*}
    If $\lambda\neq 0$ and $\mathfrak{g}_{\lambda}\neq \{0\}$, then $\lambda$ is called a restricted root of $\mathfrak{g}$.
    The set of restricted roots $\Sigma$ form an abstract root system in $\mathfrak{a}^*$.
    In general, $\Sigma$ is not equal to $\Phi$.
    However, in our case, the restricted roots corresponds bijectively to the roots of $\mathfrak{g}$, and the two root systems are equal.
\end{remark}

Now we arbitrarily fix a complete section of $\mathcal{R}$, and get a hereditary subcategory $\mathcal{B}$ of $\mathcal{R}$.
We fix a complete set of representatives $\{S_1,\cdots,S_n\}$ of isomorphism classes of simple objects in $\mathcal{B}$.

Let $\mathfrak{u}_+$ be the Lie subalgebra of $\mathfrak{g}$, $\mathbb{C}$-spanned by $u_X,X\in \operatorname{ind}\mathcal{B}$.
Then $\mathfrak{u}_+$ is the Lie algebra of $\mathbf{U}^+$.

\begin{proposition}
    We have an $\mathbb{R}$-vector space decomposition
    \begin{align*}
        \mathfrak{g}=\mathfrak{r}\oplus \mathfrak{a} \oplus \mathfrak{u}_+.
    \end{align*}
    This is called the Iwasawa decomposition for $\mathfrak{g}$.
\end{proposition}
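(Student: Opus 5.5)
We prove the decomposition by working with the explicit real bases already available for $\mathfrak{r}$ and $\mathfrak{p}$ and counting real dimensions.

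\textbf{Dimension count.} As a real vector space, $\mathfrak{g}$ has dimension $2(n+2N)$, where $n=\operatorname{rank}\mathcal{K}$ and $N=|\operatorname{ind}\mathcal{B}|$. Recall that $\operatorname{ind}\mathcal{R}=\operatorname{ind}\mathcal{B}\sqcup\operatorname{ind}T\mathcal{B}$, with both pieces of size $N$, and that $T$ interchanges them.

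\begin{itemize}
\item $\mathfrak{r}$ has real dimension $n+2N$, using the $\mathbb{Z}$-basis recalled above.
\item $\mathfrak{a}$ has real dimension $n$; it is spanned by $H_{S_1},\dots,H_{S_n}$.
\item $\mathfrak{u}_+$ has real dimension $2N$; it is spanned over $\mathbb{R}$ by $u_X, iu_X$ for $X\in\operatorname{ind}\mathcal{B}$.
\end{itemize}

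The dimensions add up to $2n+4N$. It therefore suffices to show that $\mathfrak{g}=\mathfrak{r}+\mathfrak{a}+\mathfrak{u}_+$; the sum is then automatically direct. Alternatively, one can check directness separately and use the same count.

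\textbf{Spanning.} Since $\mathfrak{g}=\mathfrak{r}\oplus\mathfrak{p}$, it is enough to express each real basis element of $\mathfrak{p}$ in $\mathfrak{r}+\mathfrak{a}+\mathfrak{u}_+$.

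\begin{itemize}
\item The elements $H_X/d(X)$ lie in $\mathfrak{a}$.
\item The elements $iH_X$ lie in $\mathfrak{r}$, so we need not consider them further.
\end{itemize}

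For the root vectors, take $X\in\operatorname{ind}\mathcal{B}$. Then:
\begin{align*}
u_X-u_{TX}&=-(u_X+u_{TX})+2u_X,\\
i(u_X+u_{TX})&=i(u_X-u_{TX})+2iu_X.
\end{align*}
In each line the first term lies in $\mathfrak{r}$ and the second in $\mathfrak{u}_+$.

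If instead $X\in\operatorname{ind}T\mathcal{B}$, write $X=TY$ with $Y\in\operatorname{ind}\mathcal{B}$. Because $T^2=1$ in $\mathcal{R}$, we have $u_X-u_{TX}=-(u_Y-u_{TY})$ and $i(u_X+u_{TX})=i(u_Y+u_{TY})$, which reduces to the previous case.

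Finally, the complex span $\mathbb{C}u_X$ for $X\in\operatorname{ind}T\mathcal{B}$ is covered by these same combinations: for $Y\in\operatorname{ind}\mathcal{B}$,
\[
u_{TY}=(u_Y+u_{TY})-u_Y.
\]

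\textbf{Directness.} As a safeguard, we also check directness directly. Suppose $r+a+u=0$ with $r\in\mathfrak{r}$, $a\in\mathfrak{a}$, $u\in\mathfrak{u}_+$.

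\begin{enumerate}
\item Apply $\theta$. Since $\theta(a)=-a$, this gives $r-a+\theta(u)=0$.
\item Subtract this from the original relation: $2a+u-\theta(u)=0$.
\item Here $\theta(u)$ lies in the span of the $u_{TX}$ with $X\in\operatorname{ind}\mathcal{B}$, which is $\mathfrak{u}_-$. The space $\mathfrak{g}=\mathcal{K}_{\mathbb{C}}\oplus\mathfrak{u}_+\oplus\mathfrak{u}_-$ is a direct sum, because $\operatorname{ind}\mathcal{B}$ and $\operatorname{ind}T\mathcal{B}$ are disjoint. Comparing components gives $a=0$ and $u=0$.
\item Hence $r=0$ as well.
\end{enumerate}

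\textbf{Main obstacle.} The only subtle point is the identification $\operatorname{ind}\mathcal{R}=\operatorname{ind}\mathcal{B}\sqcup T\operatorname{ind}\mathcal{B}$. This is the standard description of the root category by a complete section, and the paper has already used it implicitly in defining $\mathbf{U}^{\pm}$. Once it is granted, the argument is elementary linear algebra.
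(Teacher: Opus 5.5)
Your proof is correct and follows the same route as the paper. The paper simply lists the real bases $\{iH_{S_j};\,u_X+u_{TX},\,i(u_X-u_{TX})\}$ of $\mathfrak{r}$, $\{H_{S_j}\}$ of $\mathfrak{a}$ and $\{u_X,iu_X\}_{X\in\operatorname{ind}\mathcal{B}}$ of $\mathfrak{u}_+$, and you supply the dimension count, spanning and directness checks it leaves implicit; the only slip is your second identity, which should read $i(u_X+u_{TX})=-i(u_X-u_{TX})+2iu_X$, and this is harmless because $\mathfrak{r}$ is a real subspace (your directness argument plus the dimension count already suffices anyway).
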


\begin{proof}
    Notice that $\mathfrak{r}$ has an $\mathbb{R}$-basis
    \begin{align*}
         \{iH_{S_1},\cdots,iH_{S_n};u_X+u_{TX},i(u_X-u_{TX}),X\in \operatorname{ind}\mathcal{R}\},
    \end{align*}
    $\mathfrak{a}$ has an $\mathbb{R}$-basis $\{H_{S_1},\cdots,H_{S_n}\}$, and $\mathfrak{u}_+$ has an $\mathbb{R}$-basis $\{u_X,iu_X,X\in \operatorname{ind}\mathcal{B}\}$.
     The proposition follows.
\end{proof}

Let $\mathbf{A}$ be the subgroup of $\mathbf{G}$ generated by $h_{[X]}(t),X\in\operatorname{ind}\mathcal{R},t\in \mathbb{R}_{>0}$.
Then it is simply-connected and has Lie algebra $\mathfrak{a}$.

\begin{theorem}
    The multiplication $\mathbf{K}\times \mathbf{A}\times \mathbf{U}^+ \rightarrow \mathbf{G}, (k,a,u)\mapsto kau$ is a diffeomorphism.
    This is called the Iwasawa decomposition for $\mathbf{G}$.
\end{theorem}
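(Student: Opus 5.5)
The plan is to follow the classical argument (Knapp, Theorem 6.46), with the Lie algebra Iwasawa decomposition $\mathfrak{g}=\mathfrak{r}\oplus\mathfrak{a}\oplus\mathfrak{u}_+$ as the infinitesimal input. The proof has three steps: show that $\mathbf{A}\mathbf{U}^+$ is a closed subgroup diffeomorphic to $\mathbf{A}\times\mathbf{U}^+$; show that the multiplication map is injective with everywhere-invertible differential; and show that its image is all of $\mathbf{G}$.

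\emph{Step 1: structure of $\mathbf{A}\mathbf{U}^+$.} By Lemma \ref{hEh}, $\mathbf{H}$, and hence $\mathbf{A}$, normalizes $\mathbf{U}^+$. So $\mathbf{S}=\mathbf{A}\mathbf{U}^+$ is a subgroup with Lie algebra $\mathfrak{a}\oplus\mathfrak{u}_+$. In the Chevalley basis, ordered compatibly with the partial order on $\operatorname{ind}\mathcal{B}$, the elements of $\mathbf{U}^+$ act unipotently and upper triangularly. The elements of $\mathbf{A}$ act diagonally with positive real entries $t^{A_{XY}}$. Hence $\mathbf{A}\cap\mathbf{U}^+=1$. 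Moreover, $\exp:\mathfrak{u}_+\to\mathbf{U}^+$ is a diffeomorphism, since the Lie algebra is nilpotent and the group is unipotent. Therefore $(a,u)\mapsto au$ is a diffeomorphism of $\mathbf{A}\times\mathbf{U}^+$ onto the closed subgroup $\mathbf{S}$.

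\emph{Step 2: injectivity and local diffeomorphism.} For injectivity it suffices to prove $\mathbf{K}\cap\mathbf{S}=1$. Let $s=au\in\mathbf{K}=\mathbf{G}^{\Theta}$. Then $s$ preserves the inner product $(-,-)_\theta$: since $\Theta(s)=s$, we get $-(sX,\theta sY)=-(sX,s\theta Y)=(X,Y)_\theta$. Thus $s$ is unitary with respect to $(-,-)_\theta$. The Chevalley basis is orthogonal for $(-,-)_\theta$, because $(u_X,u_Y)_\theta=0$ for $X\ncong Y$, $(\mathcal{K},\mathcal{N})_\theta=0$, and on $\mathcal{K}$ we may work in a compatible orthonormal frame. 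In this basis $s$ is upper triangular with positive diagonal. A unitary upper-triangular matrix with positive diagonal is the identity, so $s=1$.

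For the differential, left and right translations reduce the question to the identity. There the differential is $(X,H,N)\mapsto X+H+N$ from $\mathfrak{r}\oplus\mathfrak{a}\oplus\mathfrak{u}_+$, which is bijective by the Iwasawa decomposition for $\mathfrak{g}$. At a general point $(k,a,u)$, I would write the differential as $\operatorname{Ad}$-conjugates of this map, as in Knapp. That gives an injective immersion with open image.

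\emph{Step 3: surjectivity, the main obstacle.} The image $\mathbf{K}\mathbf{S}$ is open by Step 2. It is closed because $\mathbf{K}$ is compact and $\mathbf{S}$ is closed. Since $\mathbf{G}=\operatorname{Int}(\mathfrak{g})$ is connected, the image is all of $\mathbf{G}$.

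A more hands-on alternative avoids topology. Given $x\in\mathbf{G}$, apply Gram--Schmidt with respect to $(-,-)_\theta$ to the images under $x$ of the ordered Chevalley basis. This writes $x=k\cdot s$ with $k$ unitary and $s$ upper triangular with positive diagonal. The difficulty is showing that $k\in\mathbf{G}^\Theta$ and $s\in\mathbf{S}$, and not merely that they lie in $\operatorname{GL}(\mathfrak{g})$. For that reason I would rely on the open-and-closed argument.

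Finally, the inverse map is smooth because a bijective local diffeomorphism is a diffeomorphism. This completes the proof.
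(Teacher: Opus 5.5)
Your proof is correct, and most of it follows the paper. The paper also:
\begin{itemize}
\item shows that $\mathbf{A}\times\mathbf{U}^+\to\mathbf{A}\mathbf{U}^+$ is a diffeomorphism onto a closed subgroup;
\item gets surjectivity because the image is open (from $\mathfrak{g}=\mathfrak{r}\oplus\mathfrak{a}\oplus\mathfrak{u}_+$) and closed (compact times closed) in the connected group $\mathbf{G}$;
\item reduces injectivity to $\mathbf{K}\cap\mathbf{A}\mathbf{U}^+=\{1\}$.
\end{itemize}

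The genuine difference is how you prove that last fact.

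\textbf{The paper's route.} It argues group-theoretically with the Cartan involution. First, $\Theta(\mathbf{A}\mathbf{U}^+)=\mathbf{A}\mathbf{U}^-$, because $T$ swaps $\mathcal{B}$ and $T\mathcal{B}$. Second, $\Theta$ inverts $\mathbf{A}$. Third, $\mathbf{A}\mathbf{U}^+\cap\mathbf{A}\mathbf{U}^-=\mathbf{A}$, which comes from $\mathbf{B}^+\cap\mathbf{B}^-=\mathbf{H}$. So a $\Theta$-fixed element of $\mathbf{A}\mathbf{U}^+$ is some $a\in\mathbf{A}$ with $a^2=1$, and $a=1$ because $\mathbf{A}\cong\mathbb{R}_{>0}^n$ is torsion-free.

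\textbf{Your route.} You use Knapp's linear-algebra argument. A $\Theta$-fixed $s$ preserves the positive definite form $(-,-)_\theta$; this follows from $\theta\circ s=\Theta(s)\circ\theta$ together with $\mathbf{G}$-invariance of $(-,-)$. On the other hand, $s\in\mathbf{A}\mathbf{U}^+$ is upper triangular with positive diagonal in an orthonormal basis adapted to the root-space decomposition. A unitary matrix of that shape is the identity.

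\textbf{What each buys.} The paper's argument needs no inner product, but it relies on the opposite-Borel fact $\mathbf{B}^+\cap\mathbf{B}^-=\mathbf{H}$, which it imports from algebraic group theory. Yours avoids that fact. Instead it reuses the orthogonality of the Chevalley basis under $(-,-)_\theta$ and the invariance lemma, both already set up for the Cartan decomposition.

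\textbf{Two small precision points.}
\begin{itemize}
\item The basis should be ordered by root height across all of $\mathfrak{g}$, including the $u_X$ with $X\in\operatorname{ind}T\mathcal{B}$ and the Cartan block, not just by a partial order on $\operatorname{ind}\mathcal{B}$. The Cartan block is harmless because $\mathbf{A}\mathbf{U}^+$ acts on it as the identity diagonal block.
\item For the differential, the cleanest formulation is the map $\psi\colon\mathbf{K}\times\mathbf{S}\to\mathbf{G}$. The identity $\psi(k_0k,ss_0)=k_0\,\psi(k,s)\,s_0$ reduces every point to $(1,1)$ by translations. This also makes explicit the local-diffeomorphism step that the paper simply attributes to ``the submersion theorem.''
\end{itemize}
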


\begin{proof}
    By definition, $\mathbf{A}$ is a closed subgroup of $\mathbf{H}$.
    Using the properties of $\mathbf{B}^+=\mathbf{U}^+\mathbf{H}$, the multiplication $\mathbf{A}\times \mathbf{U}^+\rightarrow \mathbf{A}\mathbf{U}^+$ is bijective.
    The subgroup $\mathbf{A}\mathbf{U}^+$ is closed, and has Lie algebra $\mathfrak{a}\oplus \mathfrak{u}_+$.
    Considering differentials, the multiplication $\mathbf{A}\times \mathbf{U}^+\rightarrow \mathbf{A}\mathbf{U}^+$ is a diffeomorphism.

    We can regard the multiplication map $\mathbf{K}\times \mathbf{A}\times \mathbf{U}^+ \rightarrow \mathbf{G}$ as the composition of maps $\mathbf{K}\times \mathbf{A}\times \mathbf{U}^+ \rightarrow \mathbf{K}\times \mathbf{A} \mathbf{U}^+ \rightarrow \mathbf{G} $.
    The image is the product of a compact subset and a closed subset, and is thus closed.
    On the other hand, by the Iwasawa decomposition of $\mathfrak{g}$, the differential $\mathfrak{r}\times (\mathfrak{a}\oplus \mathfrak{u}_+)\rightarrow \mathfrak{g},(X,Y)\mapsto X+Y$ is an isomorphism.
    Then the image of the multiplication map is open by the submersion theorem.
    Since $\mathbf{G}$ is connected, the image $\mathbf{K} \mathbf{A} \mathbf{U}^+=\mathbf{G}$ and the multiplication map is surjective.

    By definition, we have $\Theta(\mathbf{A} \mathbf{U}^+)=\mathbf{A} \mathbf{U}^-$, and $\Theta(h)=h^{-1}$, for any $h\in \mathbf{A}$.
    Since $\mathbf{A} \mathbf{U}^+ \bigcap \mathbf{A} \mathbf{U}^-=\mathbf{A}$, the only $\Theta$-fixed point in $\mathbf{A} \mathbf{U}^+$ is 1.
    We have shown that $\mathbf{K}=\mathbf{G}^{\Theta}$, and thus $\mathbf{K}\bigcap \mathbf{A} \mathbf{U}^+ =\{1\}$.
    This implies that the multiplication map is injective.
    Then it is easy to see that the multiplication map is a diffeomorphism and the theorem is proved.
\end{proof}

\section{Iwasawa Decomposition for Totally Positive Monoid}
In this section, we first recall Lusztig's theory of total positivity and its application to Chevalley groups.
Then we study the Iwasawa decomposition for totally positive elements. 
Precisely, we calculate the $\mathbf{A}$ and $\mathbf{U}^+$ component for each totally positive element in Iwasawa decomposition, and show that the intersection of the totally positive monoid and $\mathbf{K}$ is trivial.

\subsection{Totally Positive Monoid in Chevalley group}
For any split reductive connected algebraic group over some infinite field, Lusztig defines the totally positive submonoids, and obtain numerous results concerning canonical basis, flag varieties, and related topics, see for instance \cite{TR1,TC,TR2}.
Lusztig's theory can be naturally applied to Chevalley groups.
In this subsection, we recall the definition and some results, and refer \cite{notes3} for details.

\begin{lemma}\cite[Lemma 3.1]{notes3} \label{xhy=yhx}
    For any $X\in \operatorname{ind}\mathcal{R}$, $a,c\geq 0$ and $b>0$, we have
    \begin{align*}
        E_{[X]}(a) h_{[X]}(b) E_{[TX]}(-c) = E_{[TX]}(-\frac{c}{ac+b^2}) h_{[X]}(\frac{ac+b^2}{b}) E_{[X]}(\frac{a}{ac+b^2}).
    \end{align*}
\end{lemma}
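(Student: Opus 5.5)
The plan is to reduce the identity to a computation inside the rank-one subgroup attached to $X$. The elements $E_{[X]}(t)$, $E_{[TX]}(t)$ and $h_{[X]}(t)$ all lie in the subgroup of $\mathbf{G}$ generated by $E_{[X]}(\cdot)$ and $E_{[TX]}(\cdot)$. This subgroup is the image of a homomorphism $\varphi_X:\mathrm{SL}_2(\mathbb{C})\to\mathbf{G}$ given by
\begin{align*}
\varphi_X\begin{pmatrix}1&t\\0&1\end{pmatrix}=E_{[X]}(t),\quad
\varphi_X\begin{pmatrix}1&0\\t&1\end{pmatrix}=E_{[TX]}(t),\quad
\varphi_X\begin{pmatrix}t&0\\0&t^{-1}\end{pmatrix}=h_{[X]}(t).
\end{align*}
I would justify this via the $\mathfrak{sl}_2$-triple $u_X,u_{TX},H_X$ (suitably normalized by $d(X)$) in Peng--Xiao's realization, together with Steinberg's relations established in \cite{notes1}. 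In particular, $h_{[X]}(t)=w_X(t)w_X(1)^{-1}$ with $w_X(t)=E_{[X]}(t)E_{[TX]}(-t^{-1})E_{[X]}(t)$; this is consistent with Lemma~\ref{hEh}, since $A_{XX}=2$ and $A_{X,TX}=-2$. The sign convention for $E_{[TX]}$ versus the lower unitriangular matrix should be fixed so that it matches Lemma 31 of \cite{notes2} quoted above. Once the homomorphism is available, it suffices to verify the identity as a $2\times 2$ matrix identity.

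Next I would compute both sides as matrices. The left side is
\begin{align*}
\begin{pmatrix}1&a\\0&1\end{pmatrix}\begin{pmatrix}b&0\\0&b^{-1}\end{pmatrix}\begin{pmatrix}1&0\\-c&1\end{pmatrix}
=\begin{pmatrix}b-ac/b & a/b\\ -c/b & 1/b\end{pmatrix}.
\end{align*}
For the right side, set $D=ac+b^2$. Then
\begin{align*}
\begin{pmatrix}1&0\\-c/D&1\end{pmatrix}\begin{pmatrix}D/b&0\\0&b/D\end{pmatrix}\begin{pmatrix}1&a/D\\0&1\end{pmatrix}
=\begin{pmatrix}D/b & a/b\\ -c/b & -ac/(bD)+b/D\end{pmatrix}.
\end{align*}
The $(2,2)$ entry equals $(b^2-ac)/(bD)$, and the $(1,1)$ entry equals $(ac+b^2)/b$. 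These do not match the left side under this sign convention. That suggests the correct convention has $E_{[TX]}(t)\mapsto\begin{pmatrix}1&0\\-t&1\end{pmatrix}$, or equivalently a lower-upper factorization of a matrix with positive entries. With that convention the left side becomes
\[
\begin{pmatrix}b+ac/b& a/b\\ c/b&1/b\end{pmatrix},
\]
which is exactly the unique factorization into lower unitriangular $\cdot$ diagonal $\cdot$ upper unitriangular factors. Its diagonal part is $\mathrm{diag}(D/b,\,b/D)$, the lower entry is $c/D$, and the upper entry is $a/D$. The hypotheses $a,c\ge0$ and $b>0$ guarantee $D>0$, so every expression in the statement is defined.

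The main obstacle is pinning down this rank-one homomorphism and its sign conventions from the root-category construction, namely the structure constants of $[u_X,u_{TX}]$ and the normalization of $H_X$. I would try first to read it off from Lemma 31 of \cite{notes2}: the identity $\exp(t\,\mathrm{ad}(u_X+u_{TX}))=E_{[X]}(\tan t)h_{[X]}((\cos t)^{-1})E_{[TX]}(\tan t)$ must map to a rotation matrix, and this forces the sign convention for $E_{[TX]}$. If that route stalls, a fallback is to verify the identity directly on $\mathfrak{g}$. Both sides are products of exponentials that preserve the finite-dimensional $\mathfrak{sl}_2$-decomposition of $\mathfrak{g}$, so one can check agreement on each irreducible summand, or more simply check that the corresponding identity holds in the adjoint action on the span of $u_X,H_X,u_{TX}$ and then use faithfulness of the $\mathrm{SL}_2$ (or $\mathrm{PGL}_2$) image.
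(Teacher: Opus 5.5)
The paper does not prove this lemma; it quotes it from \cite[Lemma 3.1]{notes3}, so there is no in-text argument to compare against. Your reduction to a $2\times 2$ computation through a rank-one homomorphism $\varphi_X$ is the natural route. It is correct with the convention you end up with, namely $E_{[TX]}(s)\mapsto\begin{pmatrix}1&0\\-s&1\end{pmatrix}$.

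That sign is indeed forced by Lemma 31 of \cite{notes2}, even though $\varphi_X$ only determines elements of $\mathbf{G}$ up to its kernel $\{\pm I\}$:
\begin{itemize}
\item With your sign, the right-hand side of Lemma 31 maps to $\begin{pmatrix}\cos t&\sin t\\-\sin t&\cos t\end{pmatrix}=\exp t(E_{12}-E_{21})$, as it should.
\item With the opposite sign it maps to $\begin{pmatrix}(1+\sin^2 t)/\cos t&\sin t\\ \sin t&\cos t\end{pmatrix}$. This is not $\pm\exp t(E_{12}+E_{21})$; compare the $(2,2)$ entries.
\end{itemize}
Equivalently, $\exp(t\,\mathrm{ad}\,\tfrac{H_X}{d(X)})=h_{[X]}(e^{-t})$ together with compactness of $\mathfrak{r}$ says $u_{TX}$ corresponds to $-E_{21}$. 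Then $\Theta$ corresponds to $g\mapsto(\bar g^{T})^{-1}$, and the lemma is just the LDU factorization of the positive matrix $\begin{pmatrix}b+ac/b&a/b\\ c/b&1/b\end{pmatrix}$, which you computed correctly.

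One slip should be fixed. Under the final convention, your formula $w_X(t)=E_{[X]}(t)E_{[TX]}(-t^{-1})E_{[X]}(t)$ is wrong: it maps to $\begin{pmatrix}2&3t\\ t^{-1}&2\end{pmatrix}$, which is not monomial. The correct formula is $w_X(t)=E_{[X]}(t)E_{[TX]}(t^{-1})E_{[X]}(t)$, mapping to $\begin{pmatrix}0&t\\-t^{-1}&0\end{pmatrix}$.

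This does not damage the argument. The identification $h_{[X]}(t)=\varphi_X(\mathrm{diag}(t,t^{-1}))$ can be checked directly on $\mathfrak{g}$, since the adjoint group element is determined by its action. The element $h=-H_X/d(X)$ of the triple satisfies $[h,u_Y]=A_{XY}u_Y$, so $\varphi_X(\mathrm{diag}(t,t^{-1}))$ multiplies $u_Y$ by $t^{A_{XY}}$ and fixes $\mathcal{K}$, exactly as $h_{[X]}(t)$ does.

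Over $\mathbb{C}$, the cleanest way to obtain $\varphi_X$ is to integrate the $\mathfrak{sl}_2$-action on $\mathfrak{g}$ to the simply connected group $\mathrm{SL}_2(\mathbb{C})$, rather than through Steinberg relations.
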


Again, we arbitrarily fix a complete section of $\mathcal{R}$, and get a complete hereditary subcategory $\mathcal{B}$ of $\mathcal{R}$.
Let $\{S_1,\cdots,S_n\}$ be a complete set of representatives of isomorphism classes of simple objects in $\mathcal{B}$.

Let $\mathbf{U}^+_{\geq 0}$ be the submonoid of $\mathbf{U}^+$ generated by $E_{[S_i]}(t)$ for $t\geq 0$, $i=1,\cdots,n$.
Let $\mathbf{U}^-_{\geq 0}$ be the submonoid of $\mathbf{U}^-$ generated by $E_{[TS_i]}(-t)$ for $t\geq 0$, $i=1,\cdots,n$.
Let $\mathbf{H}_{>0}$ be the submonoid of $\mathbf{H}$ generated by $h_{[S_i]}(t)$ for $t>0$, $i=1,\cdots,n$, which coincides with $\mathbf{A}$.
Let $\mathbf{G}_{\geq 0}$ be the submonoid of $\mathbf{G}$ generated by $E_{[S_i]}(t), E_{[TS_i]}(-t)$ for $t\geq 0$, and $h_{[S_i]}(t)$ for $t>0$, $i=1,\cdots,n$.
Then $\mathbf{G}_{\geq 0}=\mathbf{U}^+_{\geq 0}\mathbf{H}_{>0}\mathbf{U}^-_{\geq 0}=\mathbf{U}^-_{\geq 0}\mathbf{H}_{>0}\mathbf{U}^+_{\geq 0}$.
By \cite[Proposition 3.3]{notes3}, the submonoid $\mathbf{G}_{\geq 0}$ is isomorphic to Lusztig's totally positive submonoid.

\begin{lemma}\label{utu}
    \cite[Lemma 2.3]{TR1}

    (1) Any element $x\in \mathbf{G}_{\geq 0}$ can be written uniquely in the form $x=u^+ t u^-$ with $u^+\in \mathbf{U}^+_{\geq 0}$, $t\in \mathbf{H}_{>0}$ and $u^-\in \mathbf{U}^-_{\geq 0}$.

    (2) Any element $x\in \mathbf{G}_{\geq 0}$ can be written uniquely in the form $x=u^- t u^+$ with $u^+\in \mathbf{U}^+_{\geq 0}$, $t\in \mathbf{H}_{>0}$ and $u^-\in \mathbf{U}^-_{\geq 0}$.
\end{lemma}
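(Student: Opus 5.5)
We prove both parts in the same way: existence comes from the generation of $\mathbf{G}_{\geq 0}$ together with the commutation relations, and uniqueness comes from the Bruhat-type uniqueness for $\mathbf{U}^+\mathbf{H}\mathbf{U}^-$. We treat (1); part (2) is symmetric, obtained by applying $\Theta$ or by interchanging the roles of $\mathcal{B}$ and $T\mathcal{B}$.

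\emph{Existence.} It suffices to show that the set $\mathbf{U}^+_{\geq 0}\mathbf{H}_{>0}\mathbf{U}^-_{\geq 0}$ is closed under right multiplication by each generator $E_{[S_i]}(a)$ with $a\geq 0$, $E_{[TS_i]}(-c)$ with $c\geq 0$, and $h_{[S_i]}(b)$ with $b>0$. It also contains $1$, so it then equals $\mathbf{G}_{\geq 0}$. Right multiplication by $E_{[TS_i]}(-c)$ is trivial. For $h_{[S_i]}(b)$ we move it to the left through $u^-$. By Lemma \ref{hEh}, conjugating $E_{[TS_j]}(-c)$ by $h_{[S_i]}(b)$ rescales the parameter by $b^{\pm A}$, which is positive. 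Hence $\mathbf{H}_{>0}$ normalizes $\mathbf{U}^-_{\geq 0}$, and likewise $\mathbf{U}^+_{\geq 0}$.

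The key case is $E_{[S_i]}(a)$, which we must move leftward through $u^-$. Write $u^-$ as a product of generators $E_{[TS_j]}(-c)$. For $j\neq i$, the elements $E_{[S_i]}$ and $E_{[TS_j]}$ commute, since $u_{S_i}$ and $u_{TS_j}$ bracket to zero: the class $H_{S_i}-H_{S_j}$ is not a root. For $j=i$, the inverse form of Lemma \ref{xhy=yhx} (solving that identity backwards) gives
\[
E_{[TS_i]}(-c)E_{[S_i]}(a)=E_{[S_i]}(a')\,h_{[S_i]}(b')\,E_{[TS_i]}(-c'),
\]
with $a',c'\geq0$, $b'>0$ whenever $1-ac>0$ is ensured. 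To avoid that sign issue, we instead take Lemma \ref{xhy=yhx} with $b=1$ and read it from right to left. This shows that $E_{[TS_i]}(-c'')h_{[S_i]}(b'')E_{[S_i]}(a'')$, with $b''>0$, can always be rewritten in the form $E_{[S_i]}\,h\,E_{[TS_i]}$ with nonnegative parameters. Solving $c''=c/(ac+b^2)$, $a''=a/(ac+b^2)$, $b''=(ac+b^2)/b$ for $a,b,c$ is possible whenever $b''^2>4a''c''$ fails. So the clean way is to keep an $h$ adjacent: since $u^+tu^-$ has $t\in\mathbf{H}_{>0}$, we push $E_{[S_i]}(a)$ leftward step by step, always carrying an $h$-factor. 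Each swap produces a new positive $h$, an $E_{[S_i]}$ with nonnegative parameter continuing leftward, and an $E_{[TS_i]}$ with nonnegative parameter left behind. The positive torus parts are then collected into $t$ by the normalization above. This is the standard argument of Lusztig [TR1, Lemma 2.3], and the main obstacle is bookkeeping this induction on the length of $u^-$ so that the parameters stay in the allowed ranges.

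\emph{Uniqueness.} This follows from the group-level fact that $\mathbf{U}^+\times\mathbf{H}\times\mathbf{U}^-\to\mathbf{G}$ is injective, because $\mathbf{B}^+\cap\mathbf{U}^-=\{1\}$ and $\mathbf{U}^+\cap\mathbf{H}=\{1\}$ for the opposite Borel subgroups recalled in Section 2. Indeed, if $u_1^+t_1u_1^-=u_2^+t_2u_2^-$, then $(u_2^+)^{-1}u_1^+\,t_1t_2^{-1}$ (after conjugation) lies in $\mathbf{B}^+\cap\mathbf{U}^-=\{1\}$, which forces $u_1^+=u_2^+$, $t_1=t_2$ and $u_1^-=u_2^-$.
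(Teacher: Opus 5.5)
\textbf{Gap in the existence step.} The paper gives no proof here; it only cites \cite[Lemma 2.3]{TR1}. Your outline is the standard argument behind that lemma: rank-one relations plus commutation of $E_{[S_i]}$ with $E_{[TS_j]}$ for $i\neq j$ give existence, and injectivity of $\mathbf{U}^+\times\mathbf{H}\times\mathbf{U}^-\to\mathbf{G}$ gives uniqueness. Your uniqueness paragraph, the commutation for $i\neq j$, and the normalization of $\mathbf{U}^{\pm}_{\geq0}$ by $\mathbf{H}_{>0}$ are fine.

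The gap is in the one step existence rests on. You need to rewrite $E_{[TS_i]}(-c)E_{[S_i]}(a)$, with $a,c\geq0$, as $E_{[S_i]}(a')h_{[S_i]}(b')E_{[TS_i]}(-c')$ with $a',c'\geq0$ and $b'>0$, and what you write about this is incorrect and self-contradictory.
\begin{itemize}
\item There is no sign issue and no condition $1-ac>0$.
\item Your condition comparing $b''^2$ with $4a''c''$ is spurious: the inversion always has a unique admissible solution (see below).
\item Reading Lemma~\ref{xhy=yhx} at $b=1$ backwards only covers triples with $b''=1+a''b''^2c''$. This never includes $b''=1$ with $a''c''>0$, which is exactly the swap you need. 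So it does not show that $E_{[TS_i]}(-c'')h_{[S_i]}(b'')E_{[S_i]}(a'')$ ``can always be rewritten''.
\item Your ``keep an $h$ adjacent'' workaround relies on precisely the general backwards reading you had just declared conditional, and the rest is deferred as ``bookkeeping''.
\end{itemize}
As written, existence is not established.

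\textbf{The missing fact.} The map $(a,b,c)\mapsto\bigl(\frac{a}{ac+b^2},\frac{ac+b^2}{b},\frac{c}{ac+b^2}\bigr)$ of Lemma~\ref{xhy=yhx} is a bijection of $\mathbb{R}_{\geq0}\times\mathbb{R}_{>0}\times\mathbb{R}_{\geq0}$ onto itself. For $(a'',b'',c'')$ in this set, put $D=1+a''b''^2c''$ and $(a,b,c)=(a''b''^2/D,\ b''/D,\ c''b''^2/D)$. Then $ac+b^2=b''^2/D$, so this triple maps to $(a'',b'',c'')$. Hence, unconditionally,
\[
E_{[TX]}(-c'')\,h_{[X]}(b'')\,E_{[X]}(a'')=E_{[X]}\Bigl(\frac{a''b''^2}{D}\Bigr)\,h_{[X]}\Bigl(\frac{b''}{D}\Bigr)\,E_{[TX]}\Bigl(-\frac{c''b''^2}{D}\Bigr).
\]
For $b''=1$ this reads $E_{[TX]}(-c)E_{[X]}(a)=E_{[X]}(\frac{a}{1+ac})\,h_{[X]}(\frac{1}{1+ac})\,E_{[TX]}(-\frac{c}{1+ac})$ for all $a,c\geq0$.

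\textbf{Completing (1).} With this identity, argue by induction on the number of generators in $u^-$:
\begin{itemize}
\item write $u^-=w^-g$ with $g$ a generator;
\item move $E_{[S_i]}(a)$ past $g$, either by commuting or by the identity above;
\item apply the induction hypothesis to $w^-E_{[S_i]}(a_1)$;
\item push the resulting $h$'s leftward through $\mathbf{U}^-_{\geq0}$ using Lemma~\ref{hEh}.
\end{itemize}
This gives $u^-E_{[S_i]}(a)=E_{[S_i]}(a')\,h\,v^-$ with $a'\geq0$, $h\in\mathbf{H}_{>0}$ and $v^-\in\mathbf{U}^-_{\geq0}$, hence existence in (1). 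No $h$ has to be carried along.

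\textbf{On (2).} It is not literally obtained by applying $\Theta$ or by swapping $\mathcal{B}$ and $T\mathcal{B}$. The reason is that $\Theta$ sends $E_{[S_i]}(t)$ to $E_{[TS_i]}(t)$, the wrong sign, so it does not preserve $\mathbf{G}_{\geq0}$; you would also need a sign change by an element of $\mathbf{H}$. The simplest route is to rerun the same induction, moving $E_{[TS_i]}(-c)$ leftward through $u^+$. That uses Lemma~\ref{xhy=yhx} exactly in its stated direction.
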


For any $n\in \mathbf{N}$, we denote its image under the natural projection $\mathbf{N}\rightarrow \mathbf{N}/\mathbf{H}$ by $\overline{n}$.
The length of $\overline{n}$ is 
\begin{align*}
    l(\overline{n})=\operatorname{min} \{ t\geq 0| \overline{n}=\overline{n_{[S_{i_1}]}}\cdots  \overline{n_{[S_{i_t}]}}, S_{i_1},\cdots,S_{i_t} \text{ are simple objects in }\mathcal{B} \},
\end{align*}
and a reduced expression of $\overline{n}$ is a sequence $(i_1,\cdots,i_t)$ such that $ \overline{n}=\overline{n_{[S_{i_1}]}}\cdots  \overline{n_{[S_{i_t}]}}$ and $t=l(\overline{n})$.
Let $\overline{n}_0$ be the longest element in $\mathbf{N}/\mathbf{H}$.

\begin{lemma}\label{u>=0}
    \cite[Lemma 2.10]{TR1}
    Let $(i_1,\cdots,i_m)$ be any reduced expression of $\overline{n}_0$, then the image of the map $\mathbb{R}_{\geq 0}^m\rightarrow \mathbf{U}^+, (a_1,\cdots,a_m)\mapsto E_{[S_{i_1}]}(a_1)\cdots E_{[S_{i_m}]}(a_m)$ is equal to $\mathbf{U}_{\geq 0}^+$.
\end{lemma}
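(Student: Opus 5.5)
This is Lusztig's Lemma 2.10 in \cite{TR1}: for any reduced expression $(i_1,\cdots,i_m)$ of $\overline{n}_0$, the products $E_{[S_{i_1}]}(a_1)\cdots E_{[S_{i_m}]}(a_m)$ with all $a_j\geq 0$ fill out exactly $\mathbf{U}^+_{\geq 0}$.

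One inclusion is immediate. Each factor $E_{[S_{i_j}]}(a_j)$ with $a_j\geq 0$ is a generator of $\mathbf{U}^+_{\geq 0}$, so every such product lies in $\mathbf{U}^+_{\geq 0}$. For the converse I let $Y$ be the image of the map. Since $\mathbf{U}^+_{\geq 0}$ is generated as a monoid by the $E_{[S_i]}(t)$ with $t\geq 0$, and $1\in Y$, it suffices to prove that $Y$ is stable under right multiplication by each $E_{[S_i]}(t)$, $t\geq 0$.

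The first tool is independence of the reduced expression. Any two reduced expressions of $\overline{n}_0$ are related by braid moves (Matsumoto/Tits). So I need the rank-two braid identities: for simple $S_i,S_j$ with $m_{ij}$ the order of $\overline{n_{[S_i]}}\,\overline{n_{[S_j]}}$, and any nonnegative parameters $a_1,\dots,a_{m_{ij}}$, there should be nonnegative $b_k$ with
\begin{align*}
E_{[S_i]}(a_1)E_{[S_j]}(a_2)\cdots = E_{[S_j]}(b_1)E_{[S_i]}(b_2)\cdots .
\end{align*}
The $b_k$ are given by subtraction-free rational functions of the $a_k$. Both sides have $m_{ij}$ alternating factors. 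Concretely:
\begin{itemize}
\item In the orthogonal case the two root subgroups commute, and the identity is trivial.
\item In type $A_2$ the identity is
\begin{align*}
E_i(a)E_j(b)E_i(c)=E_j\Bigl(\tfrac{bc}{a+c}\Bigr)E_i(a+c)E_j\Bigl(\tfrac{ab}{a+c}\Bigr).
\end{align*}
It is checked in the rank-two Chevalley subgroup using the commutator relations of \cite{notes1}. When $a+c=0$, both $a$ and $c$ vanish and one takes the limit.
\item For $B_2$ and $G_2$ there are analogous explicit subtraction-free formulas, which I would take from Lusztig or Berenstein--Fomin--Zelevinsky.
\end{itemize}
Since the structure constants come from Hall polynomials, I would check that the signs match the standard Chevalley conventions. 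The conclusion of this step is that $Y$ does not depend on the reduced expression chosen.

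Next I show $Y\cdot E_{[S_i]}(t)\subseteq Y$ for $t\geq 0$. Since $\overline{n}_0$ has length $m$ and $l(\overline{n}_0\overline{n_{[S_i]}})<l(\overline{n}_0)$, there is a reduced expression of $\overline{n}_0$ ending in $i$. By the previous step I may compute $Y$ using it. Then
\begin{align*}
E_{[S_{i_1}]}(a_1)\cdots E_{[S_i]}(a_m)\,E_{[S_i]}(t)=E_{[S_{i_1}]}(a_1)\cdots E_{[S_i]}(a_m+t),
\end{align*}
and $a_m+t\geq 0$, so the product lies in $Y$. Hence $Y$ is a submonoid containing all generators, and $Y=\mathbf{U}^+_{\geq 0}$.

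The main obstacle is the braid-move step. A braid move replaces consecutive factors with nonnegative parameters by new factors whose parameters must again be nonnegative. This requires the explicit rank-two identities with subtraction-free coefficients, including the degenerate boundary cases where some parameters vanish; I would handle those by continuity or closedness. For doubly-laced and $G_2$ cases I would rely on Lusztig's computation in \cite{TR1}. The comparison with Lusztig's setting is legitimate here because \cite[Proposition 3.3]{notes3} identifies $\mathbf{G}_{\geq 0}$ with his monoid.
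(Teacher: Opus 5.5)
Your argument is correct, but it does not follow the paper. The paper gives no proof here: it imports \cite[Lemma 2.10]{TR1}, using \cite[Proposition 3.3]{notes3} to identify $\mathbf{G}_{\geq 0}$ with Lusztig's monoid. Lusztig's argument behind that citation, as I understand it, works with cells and strictly positive parameters. First, for each $w\in\mathbf{W}$, the image $\mathbf{U}^+(w)$ of $\mathbb{R}_{>0}^{l(w)}$ under a reduced expression of $w$ does not depend on the expression. Second, left multiplication by $E_{[S_i]}(a)$ with $a>0$ sends $\mathbf{U}^+(w)$ into $\mathbf{U}^+(s_iw)$ or into $\mathbf{U}^+(w)$. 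Here $s_i=\overline{n_{[S_i]}}$, and the second case uses a reduced expression of $w$ beginning with $i$. Hence $\mathbf{U}^+_{\geq 0}=\bigsqcup_w\mathbf{U}^+(w)$. Finally, every $w$ has a reduced expression occurring as a subword of $(i_1,\dots,i_m)$, and setting the other parameters to $0$ reaches every cell.

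You use the same trick, a reduced word ending in the relevant letter, but only for $\overline{n}_0$ and directly on the closed parametrization $\mathbb{R}_{\geq 0}^m$. This is shorter and avoids both the Bruhat subword property and the cell decomposition. The price is that you must perform braid moves with possibly vanishing parameters. The cell route never meets boundary values in a braid move, and it produces the stratification $\mathbf{U}^+_{\geq 0}=\bigsqcup_w\mathbf{U}^+(w)$ as a by-product. Both routes rest on the same computational input, the subtraction-free rank-two identities.

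On your stated main obstacle, no continuity is needed. If some parameter in an alternating segment of length $m_{ij}$ vanishes, delete that factor and merge its neighbours, and repeat. You obtain an alternating word of length at most $m_{ij}-1$ with positive parameters. Any such word is a consecutive subword of the opposite alternating word of length $m_{ij}$, so you can pad with zeros.

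A continuity or closedness argument would additionally need the image of $\mathbb{R}_{\geq 0}^m$ to be closed, i.e.\ properness of the parametrization; this is true but requires proof. Also, in your $A_2$ formula the case $a=c=0$ is not a limit (the limit does not exist); it is simply $E_j(b)E_i(0)E_j(0)$.

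The sign check is unnecessary as well. The elements $u_{S_i},u_{S_j}$ satisfy the Serre relations, so $e_i\mapsto u_{S_i}$, $e_j\mapsto u_{S_j}$ identifies the positive part of the rank-two algebra with the subalgebra they generate. The standard rank-two identities are identities in the corresponding unipotent group, so they hold verbatim for $E_{[S_i]}(a)=\exp(a\operatorname{ad}u_{S_i})$, whatever the Hall-polynomial sign conventions.
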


Entirely analogous result holds for $\mathbf{U}^-$. 
That is, 
\begin{align*}
    \mathbf{U}^-=\{ E_{[TS_{i_1}]}(-a_1)\cdots E_{[TS_{i_m}]}(-a_m)|(a_1,\cdots,a_m)\in \mathbb{R}_{\geq 0}^m         \}
\end{align*}
for any reduced expression $(i_1,\cdots,i_m)$ of $\overline{n}_0$.

\subsection{Iwasawa Decomposition for Totally Positive Elements}
In this subsection, we will study the Iwasawa decomposition for $\mathbf{G}_{\geq 0}$.

We keep the notations from the previous sections, and arbitrarily fix a reduced expression $(i_1,\cdots,i_m)$ of $\overline{n}_0$.
Let $(a_{ij})$ be the Cartan martix.
For simplicity, we denote $h_{[S_i]}(t)$ by $h_i(t)$, for $i=1,\cdots,n$ and $t\in \mathbb{C}$.

For any $b_1,\cdots,b_n\in \mathbb{R}_{>0}$ and $c_1,\cdots,c_m\in \mathbb{R}_{\geq 0}$, we will inductively calculate $\alpha_1,\cdots,\alpha_n\in \mathbb{R}_{>0}$ and $\beta_1,\cdots,\beta_m \in \mathbb{R}_{\geq 0}$, such that 
\begin{align*}
    &h_1(b_1)\cdots h_n(b_n) E_{[S_{i_m}]}(c_m) \cdots E_{[S_{i_1}]}(c_1)E_{[TS_{i_1}]}(-c_1)\cdots E_{[TS_{i_m}]}(-c_m) h_1(b_1)\cdots h_n(b_n) \\
    &= E_{[TS_{i_1}]}(-\beta_1)\cdots E_{[TS_{i_m}]}(-\beta_m)h_1(\alpha_1)\cdots h_n(\alpha_n) E_{[S_{i_m}]}(\beta_m)\cdots E_{[S_{i_1}]}(\beta_1).
\end{align*}
Note that the parameters in $E_{[TS_{i_j}]}(-)$ and $E_{[S_{i_j}]}(-)$ in the right hand side of the equation are $\pm \beta_j$. 
If we denote the left hand side of the equation by $x$, then this coincidence follows from $\Theta(x)=x^{-1}$ and $x\in \mathbf{G}_{\geq 0}$.

We firstly consider $E_{[S_{i_m}]}(c_m) \cdots E_{[S_{i_1}]}(c_1)E_{[TS_{i_1}]}(-c_1)\cdots E_{[TS_{i_m}]}(-c_m)$.
By Lemma \ref{xhy=yhx}, we have 
\begin{align*}
    E_{[S_{i_1}]}(c_1)E_{[TS_{i_1}]}(-c_1)=E_{[TS_{i_1}]}(-\frac{c_1}{c_1^2+1})h_{i_1}(c_1^2+1)E_{[S_{i_1}]}(\frac{c_1}{c_1^2+1}),
\end{align*}
and let 
\begin{align*}
    \beta_1^{(1)}=\frac{c_1}{c_1^2+1},\quad \alpha_{i_1}^{(1)}=c_1^2+1.
\end{align*}

For $t>1$, assume that we have calculated $\beta_1^{(t)},\cdots,\beta_t^{(t)}\geq 0$ and $\alpha_{i_1}^{(t)},\cdots,\alpha_{i_t}^{(t)}>0$, such that 
\begin{align*}
    &E_{[S_{i_t}]}(c_t) \cdots E_{[S_{i_1}]}(c_1)E_{[TS_{i_1}]}(-c_1)\cdots E_{[TS_{i_t}]}(-c_t)\\
    =&E_{[TS_{i_1}]}(-\beta_1^{(t)})\cdots E_{[TS_{i_t}]}(-\beta_t^{(t)})h_{i_1}(\alpha_{i_1}^{(t)})\cdots h_{i_t}(\alpha_{i_t}^{(t)}) E_{[S_{i_t}]}(\beta_t^{(t)})\cdots E_{[S_{i_1}]}(\beta_1^{(t)}).
\end{align*}
Then we consider $E_{[S_{i_{t+1}}]}(c_{t+1}) \cdots E_{[S_{i_1}]}(c_1)E_{[TS_{i_1}]}(-c_1)\cdots E_{[TS_{i_{t+1}}]}(-c_{t+1})$.

If $i\neq j$, then $S_i$ and $TS_j$ have no indecomposable extensions, and thus for any $s,r\in \mathbb{C}$, $E_{[S_i]}(s)E_{[TS_j]}(r)=E_{[TS_j]}(r)E_{[S_i]}(s)$.
Repeatedly using this fact and Lemma \ref{xhy=yhx}, (or using an inductive method similar to the proof of \cite[Proposition 3.2]{TC}), for $1\leq k\leq t$, we have:

 if $i_k\neq i_{t+1}$, then 
\begin{align}\label{inductive1}
    \beta_k^{(t+1)}=\beta_k^{(t)}(c_{t+1}\sum_{\substack{i_s=i_{t+1}\\1\leq s<k}} \beta_s^{(t)}+1)^{-a_{i_{t+1},i_k}};
\end{align}

 if $i_k=i_{t+1}$, then 
\begin{align}\label{inductive2}
    \beta_k^{(t+1)}=\beta_k^{(t)}(c_{t+1}\sum_{\substack{i_s=i_{t+1}\\1\leq s<k}} \beta_s^{(t)}+1)^{-1} (c_{t+1}\sum_{\substack{i_s=i_{t+1}\\1\leq s\leq k}} \beta_s^{(t)}+1)^{-1}.
\end{align}
Note that these formulas are similar to the formulas in \cite[Definition 3.6]{notes3}.
Now we have 
\begin{align*}
    &E_{[S_{i_{t+1}}]}(c_{t+1})E_{[TS_{i_1}]}(-\beta_1^{(t)})\cdots E_{[TS_{i_t}]}(-\beta_t^{(t)})\\
    =& E_{[TS_{i_1}]}(-\beta_1^{(t+1)})\cdots E_{[TS_{i_t}]}(-\beta_t^{(t+1)}) \\
    &\times  E_{[S_{i_{t+1}}]}(c_{t+1}(c_{t+1}\sum_{\substack{i_s=i_{t+1}\\1\leq s\leq t}} \beta_s^{(t)}+1)) h_{i_{t+1}}(c_{t+1}\sum_{\substack{i_s=i_{t+1}\\1\leq s\leq t}} \beta_s^{(t)}+1).
\end{align*}
Similar result holds for $ E_{[S_{i_t}]}(\beta_t^{(t)})\cdots E_{[S_{i_1}]}(\beta_1^{(t)}) E_{[TS_{i_{t+1}}]}(-c_{t+1})$.
Putting them together, we have 

\begin{align*}
    &E_{[S_{i_{t+1}}]}(c_{t+1}) \cdots E_{[S_{i_1}]}(c_1)E_{[TS_{i_1}]}(-c_1)\cdots E_{[TS_{i_{t+1}}]}(-c_{t+1})\\
   =&  E_{[TS_{i_1}]}(-\beta_1^{(t+1)})\cdots E_{[TS_{i_t}]}(-\beta_t^{(t+1)})E_{[S_{i_{t+1}}]}(c_{t+1}(c_{t+1}\sum_{\substack{i_s=i_{t+1}\\1\leq s\leq t}} \beta_s^{(t)}+1))  \\
    &\times  h_{i_{t+1}}(c_{t+1}\sum_{\substack{i_s=i_{t+1}\\1\leq s\leq t}} \beta_s^{(t)}+1) h_{i_1}(\alpha_{i_1}^{(t)})\cdots h_{i_t}(\alpha_{i_t}^{(t)}) h_{i_{t+1}}(c_{t+1}\sum_{\substack{i_s=i_{t+1}\\1\leq s\leq t}} \beta_s^{(t)}+1) \\
    &\times E_{[TS_{i_{t+1}}]}(-c_{t+1}(c_{t+1}\sum_{\substack{i_s=i_{t+1}\\1\leq s\leq t}} \beta_s^{(t)}+1))  E_{[S_{i_t}]}(\beta_t^{(t+1)})\cdots E_{[S_{i_1}]}(\beta_1^{(t+1)}).
\end{align*}
By Lemma \ref{hEh} and Lemma \ref{xhy=yhx}, we move $E_{[S_{i_{t+1}}]}(-)$ backward in the formula and move $E_{[TS_{i_{t+1}}]}(-)$ forward.
Notice that for $1\leq k\leq t$, we have $\alpha_{i_k}^{(t+1)}=\alpha_{i_k}^{(t)}$, and we denote it by $\tilde{\alpha}_{i_k}$. 
Then 
\begin{align}\label{beta}
    \beta_{t+1}^{(t+1)}=\frac{c_{t+1} (c_{t+1}\sum_{\substack{i_s=i_{t+1}\\1\leq s\leq t}} \beta_s^{(t)}+1)^{-3} \prod_{k=1}^{t} \tilde{\alpha}_{i_k}^{-a_{i_k,i_{t+1}}} }{c_{t+1}^2 (c_{t+1}\sum_{\substack{i_s=i_{t+1}\\1\leq s\leq t}} \beta_s^{(t)}+1)^{-2} \prod_{k=1}^{t} \tilde{\alpha}_{i_k}^{-a_{i_k,i_{t+1}}}+1 },
\end{align}
and 
\begin{align}\label{alpha}
    \tilde{\alpha}_{i_{t+1}}=c_{t+1}^2 \prod_{k=1}^{t} \tilde{\alpha}_{i_k}^{-a_{i_k,i_{t+1}}} + (c_{t+1}\sum_{\substack{i_s=i_{t+1}\\1\leq s\leq t}} \beta_s^{(t)}+1)^2.
\end{align}

Inductively, we obtain $\beta_1^{(m)},\cdots,\beta_m^{(m)}\geq 0$ and $\tilde{\alpha}_{i_1},\cdots,\tilde{\alpha}_{i_m}>0$, such that 
\begin{align*}
    &E_{[S_{i_m}]}(c_m) \cdots E_{[S_{i_1}]}(c_1)E_{[TS_{i_1}]}(-c_1)\cdots E_{[TS_{i_m}]}(-c_m)\\
    =&E_{[TS_{i_1}]}(-\beta_1^{(m)})\cdots E_{[TS_{i_m}]}(-\beta_m^{(m)}) h_{i_1}(\tilde{\alpha}_{i_1})\cdots h_{i_m}(\tilde{\alpha}_{i_m}) E_{[S_{i_m}]}(\beta_m^{(m)}) \cdots E_{[S_{i_1}]}(\beta_1^{(m)}).
\end{align*}

For $1\leq k\leq m$, let 
\begin{align*}
    \beta_k=\beta_k^{(m)}\prod_{j=1}^{n} b_j^{-a_{j,i_k}}.
\end{align*}

For $1\leq j\leq n$, let 
\begin{align*}
    \alpha_j=\prod_{\substack{1\leq k\leq m\\i_k=j}} \tilde{\alpha}_{i_k} b_j^2.
\end{align*}

We summarize the above calculations as the following proposition.

\begin{proposition}\label{ab}
    For any $b_1,\cdots,b_n\in \mathbb{R}_{>0}$ and $c_1,\cdots,c_m\in \mathbb{R}_{\geq 0}$, there exist $\alpha_1,\cdots,\alpha_n\in \mathbb{R}_{>0}$ and $\beta_1,\cdots,\beta_m \in \mathbb{R}_{\geq 0}$, such that 
\begin{align*}
    &h_1(b_1)\cdots h_n(b_n) E_{[S_{i_m}]}(c_m) \cdots E_{[S_{i_1}]}(c_1)E_{[TS_{i_1}]}(-c_1)\cdots E_{[TS_{i_m}]}(-c_m) h_1(b_1)\cdots h_n(b_n) \\
    &= E_{[TS_{i_1}]}(-\beta_1)\cdots E_{[TS_{i_m}]}(-\beta_m)h_1(\alpha_1)\cdots h_n(\alpha_n) E_{[S_{i_m}]}(\beta_m)\cdots E_{[S_{i_1}]}(\beta_1),
\end{align*}
and these numbers can be inductively calculated via rational functions of $b_1,\cdots,b_n$, $c_1,\cdots,c_m$.
\end{proposition}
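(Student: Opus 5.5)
The plan is an induction on the length $t$ of the truncated word, packaged exactly as in the calculation preceding the statement. At the end, the torus factors $h_1(b_1)\cdots h_n(b_n)$ on both sides are absorbed by Lemma \ref{hEh}.

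\emph{Base case.} For $t=1$, apply Lemma \ref{xhy=yhx} with $a=c=c_1$ and $b=1$. This gives
\[
E_{[S_{i_1}]}(c_1)E_{[TS_{i_1}]}(-c_1)=E_{[TS_{i_1}]}(-\beta_1^{(1)})\,h_{i_1}(\alpha^{(1)}_{i_1})\,E_{[S_{i_1}]}(\beta_1^{(1)}),
\]
with $\beta_1^{(1)}=c_1/(c_1^2+1)\geq 0$ and $\alpha^{(1)}_{i_1}=c_1^2+1>0$.

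\emph{Inductive step.} Assume the factorization holds for $t$, with nonnegative $\beta^{(t)}_k$ and positive $\alpha$'s that are rational in the $c$'s. I would conjugate the left factor $E_{[S_{i_{t+1}}]}(c_{t+1})$ rightward through $E_{[TS_{i_1}]}(-\beta_1^{(t)})\cdots E_{[TS_{i_t}]}(-\beta_t^{(t)})$, and symmetrically move $E_{[TS_{i_{t+1}}]}(-c_{t+1})$ leftward through the $E_{[S_{i_k}]}$'s. Two facts make this work:

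\begin{itemize}
\item For $i\neq j$, $E_{[S_i]}$ and $E_{[TS_j]}$ commute.
\item When $i_k=i_{t+1}$, Lemma \ref{xhy=yhx} (with $b=1$) swaps the pair and produces an $h_{i_{t+1}}$ factor. Lemma \ref{hEh} then pushes that $h$-factor past the remaining root elements, rescaling their parameters by the powers $(\cdots)^{-a_{i_{t+1},i_k}}$.
\end{itemize}

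This yields formulas (\ref{inductive1}) and (\ref{inductive2}) for $\beta^{(t+1)}_k$ with $k\leq t$. After the exchange, the middle of the word is $E_{[S_{i_{t+1}}]}(\cdot)\,h\,E_{[TS_{i_{t+1}}]}(\cdot)$, with $h$ a product of positive torus elements. I then use Lemma \ref{hEh} to bring $h$ next to one $E$, and apply Lemma \ref{xhy=yhx} once more with $a,c\geq0$ and $b>0$. This produces (\ref{beta}) and (\ref{alpha}). All new quantities are rational in the data, with positive denominators of the form $c\sum\beta+1$ or $ac+b^2$. Nonnegativity of the $\beta$'s and positivity of the $\alpha$'s are therefore preserved.

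The main obstacle is this bookkeeping: tracking exactly which $h$-factors arise and which root elements they must pass. In particular, the $E_{[TS_{i_k}]}$ and $E_{[S_{i_k}]}$ factors on the two sides must receive the \emph{same} rescaling, so that the symmetric form $\pm\beta_k$ survives. I would secure that symmetry independently by a structural argument. If $x$ denotes the left-hand side, then $\Theta(x)=x^{-1}$. This holds because $\Theta$ swaps $E_{[S_i]}(c)$ with $E_{[TS_i]}(\bar c)$ and inverts real positive $h$'s. Moreover, $x\in\mathbf{G}_{\geq 0}$.

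By Lemma \ref{utu}(2), $x=u^-tu^+$ uniquely. Comparing with $\Theta(x)^{-1}=\Theta(u^+)^{-1}\,\Theta(t)^{-1}\,\Theta(u^-)^{-1}$, which again has the form $u^-tu^+$, uniqueness gives $u^+=\Theta(u^-)^{-1}$. Writing $u^-$ via Lemma \ref{u>=0} in the reduced word then gives the mirrored parameters.

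Finally, I include $h_1(b_1)\cdots h_n(b_n)$ on both sides and commute them inward using Lemma \ref{hEh}. This rescales each $\beta_k^{(m)}$ by $\prod_j b_j^{-a_{j,i_k}}$; the same factor appears on both sides because of the conjugation identities. It also multiplies the torus part by $b_j^2$, and collecting the $\tilde\alpha_{i_k}$ with $i_k=j$ gives $\alpha_j$. This is exactly the stated rational formula.
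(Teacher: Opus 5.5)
Your proposal is correct and follows the paper's own argument: the same induction on $t$ via Lemma \ref{xhy=yhx} and Lemma \ref{hEh} produces (\ref{inductive1})--(\ref{alpha}), and you then absorb $h_1(b_1)\cdots h_n(b_n)$ to get $\beta_k=\beta_k^{(m)}\prod_j b_j^{-a_{j,i_k}}$ and $\alpha_j=b_j^2\prod_{i_k=j}\tilde\alpha_{i_k}$. Your $\Theta(x)=x^{-1}$ uniqueness argument for the mirrored parameters $\pm\beta_k$ is the justification the paper gives just before the computation; together with Lemma \ref{utu} and Lemma \ref{u>=0} it already gives existence, and the explicit computation is what supplies the rational-function formulas.
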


\begin{theorem}
    The Iwasawa decomposition of $\mathbf{G}_{\geq 0}$ has its $\mathbf{A}$ and $\mathbf{U}^+$ component contained in $\mathbf{G}_{\geq 0}$.
\end{theorem}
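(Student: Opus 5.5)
Given $x\in\mathbf{G}_{\geq 0}$, write its Iwasawa decomposition $x=kau$ with $k\in\mathbf{K}$, $a\in\mathbf{A}$, $u\in\mathbf{U}^+$. Consider $y=\Theta(x)^{-1}x$. Since $\Theta(k)=k$ (because $\mathbf{K}=\mathbf{G}^{\Theta}$), the factor $k$ cancels and
\begin{align*}
y=\Theta(u)^{-1}\Theta(a)^{-1}au=\Theta(u)^{-1}a^{2}u,
\end{align*}
using $\Theta(a)=a^{-1}$ for $a\in\mathbf{A}$. Here $\Theta(u)^{-1}\in\mathbf{U}^-$. The strategy is therefore to compute $y$ independently from the totally positive form of $x$, put it into the shape $u^-tu^+$, and compare.

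By Lemma \ref{utu}(1), write $x=u^+ t u^-$ with $u^+\in\mathbf{U}^+_{\geq 0}$, $t\in\mathbf{H}_{>0}$, $u^-\in\mathbf{U}^-_{\geq 0}$. By Lemma \ref{u>=0}, take $u^+=E_{[S_{i_1}]}(c_1)\cdots E_{[S_{i_m}]}(c_m)$ with $c_j\geq 0$, and $t=h_1(b_1)\cdots h_n(b_n)$ with $b_j>0$. Then
\begin{align*}
\Theta(u^+)^{-1}=E_{[TS_{i_m}]}(-c_m)\cdots E_{[TS_{i_1}]}(-c_1),
\end{align*}
using $\Theta(E_{[X]}(c))=E_{[TX]}(c)$ for real $c$ and $\Theta(t)=t^{-1}$. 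Hence
\begin{align*}
y=\Theta(u^-)^{-1}\,t\,\Theta(u^+)^{-1}u^+\,t\,u^-.
\end{align*}
The middle factor $t\,\Theta(u^+)^{-1}u^+\,t$ has exactly the shape treated in Proposition \ref{ab}, up to reversing the reduced word. Reversing is harmless because $(i_m,\dots,i_1)$ is again a reduced expression of $\overline{n}_0$, the longest element being an involution. Proposition \ref{ab} rewrites this factor as $v^-h\,v^+$ with $v^\pm\in\mathbf{U}^\pm_{\geq 0}$ and $h=h_1(\alpha_1)\cdots h_n(\alpha_n)\in\mathbf{H}_{>0}$. Moreover $\Theta(u^-)^{-1}\in\mathbf{U}^+_{\geq0}$, since $\Theta$ sends $E_{[TS_i]}(-s)$ to $E_{[S_i]}(-s)$, whose inverse is $E_{[S_i]}(s)$. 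So
\begin{align*}
y=\Theta(u^-)^{-1}\,v^-hv^+\,u^-\in\mathbf{G}_{\geq0}.
\end{align*}
By Lemma \ref{utu}(2), $y=w^-h'w^+$ with $w^+\in\mathbf{U}^+_{\geq0}$, $h'\in\mathbf{H}_{>0}$, $w^-\in\mathbf{U}^-_{\geq0}$.

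Now compare the two expressions $y=\Theta(u)^{-1}a^2u=w^-h'w^+$. Both are decompositions in $\mathbf{U}^-\mathbf{H}\mathbf{U}^+$, which is unique in $\mathbf{G}$ (the big cell $\mathbf{U}^-\mathbf{H}\mathbf{U}^+$ has unique factorization). Hence $u=w^+\in\mathbf{U}^+_{\geq0}$ and $a^2=h'\in\mathbf{H}_{>0}$. Since $\mathbf{H}_{>0}=\mathbf{A}\cong\mathbb{R}_{>0}^n$ via $\prod h_i(t_i)$, taking square roots componentwise gives $a\in\mathbf{H}_{>0}\subset\mathbf{G}_{\geq0}$. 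This proves the theorem, and Proposition \ref{ab} makes $a$ and $u$ explicit.

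The main obstacle is the middle step: showing $y\in\mathbf{G}_{\geq0}$ with exactly the right bookkeeping of $\Theta$ on the generators (signs and ordering). I must check carefully that $\Theta$ maps $\mathbf{U}^\pm_{\geq0}$ to the inverses of $\mathbf{U}^\mp_{\geq0}$, and that the closure of $\mathbf{G}_{\geq0}$ under products together with Proposition \ref{ab} covers the configuration at hand. Uniqueness of the $\mathbf{U}^-\mathbf{H}\mathbf{U}^+$ factorization in all of $\mathbf{G}$ is standard from the Bruhat decomposition, since $\mathbf{U}^-\cap\mathbf{H}\mathbf{U}^+=1$.
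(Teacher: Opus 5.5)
Your proof is correct, but it takes a different route from the paper's.

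\textbf{The paper's route.} The paper starts from $z=u^-tu^+$ (Lemma \ref{utu}(2)). Then in
$\Theta(z)^{-1}z=\Theta(u^+)^{-1}\bigl(t\,\Theta(u^-)^{-1}u^-\,t\bigr)u^+$
the outer factors already lie in $\mathbf{U}^-$ and $\mathbf{U}^+$. Only the middle block, which is in $\mathbf{U}^+\mathbf{U}^-$ order, has to be flipped. Proposition \ref{ab} does this explicitly, producing $u=E_{[S_{i_m}]}(\beta_m)\cdots E_{[S_{i_1}]}(\beta_1)u^+_{\geq 0}$ and $a=\prod_j h_j(\alpha_j^{1/2})$. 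The paper then reuses these formulas to prove $\mathbf{G}_{\geq 0}\cap\mathbf{K}=\{1\}$.

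\textbf{Your route.} Your argument actually rests on a different observation. The map $g\mapsto\Theta(g)^{-1}$ is an anti-automorphism sending the generators $E_{[S_i]}(s)$, $E_{[TS_i]}(-s)$, $h_i(b)$ of $\mathbf{G}_{\geq0}$ to $E_{[TS_i]}(-s)$, $E_{[S_i]}(s)$, $h_i(b)$. Hence $y=\Theta(x)^{-1}x\in\mathbf{G}_{\geq0}$ by monoid closure alone. Lemma \ref{utu}(2) together with uniqueness in the big cell $\mathbf{U}^-\mathbf{H}\mathbf{U}^+$ then determines $u$ and $a^2$. This is shorter and needs neither Lemma \ref{u>=0} nor Proposition \ref{ab}, but it is non-constructive.

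\textbf{A slip.} Your appeal to Proposition \ref{ab} is misplaced. Because you started from $x=u^+tu^-$, the block $t\,\Theta(u^+)^{-1}u^+\,t$ has its $\mathbf{U}^-$-part \emph{before} its $\mathbf{U}^+$-part. That is the opposite of the configuration in Proposition \ref{ab}, and reversing the reduced word does not change it. The slip is harmless for the proof, since this block is trivially $(t\Theta(u^+)^{-1}t^{-1})\,t^2\,(t^{-1}u^+t)\in\mathbf{U}^-_{\geq0}\mathbf{H}_{>0}\mathbf{U}^+_{\geq0}$ by Lemma \ref{hEh}. However, it means your closing claim that Proposition \ref{ab} makes $a$ and $u$ explicit is unfounded in your setup.

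Finally, the square-root step is superfluous, since $a\in\mathbf{A}=\mathbf{H}_{>0}$ from the outset.
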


\begin{proof}
    For any element $z\in \mathbf{G}_{\geq 0}$, by Lemma \ref{utu} and Lemma \ref{u>=0}, it can be written of the form $z=E_{[TS_{i_1}]}(-c_1)\cdots E_{[TS_{i_m}]}(-c_m)h_1(b_1)\cdots h_n(b_n)u_{\geq 0}^+$, where $(i_1,\cdots,i_m)$ is a reduced expression of $\overline{n}_0$, $c_1,\cdots,c_m\geq 0$, $b_1,\cdots,b_n>0$, and $u_{\geq 0}^+\in \mathbf{U}^+_{\geq 0}$.

    By Iwasawa decomposition of $\mathbf{G}$, there exists $u\in \mathbf{U}^+$ and $a\in \mathbf{A}$, such that $\Theta(zu^{-1}a^{-1})=zu^{-1}a^{-1}$.
    That is, 
    \begin{align*}
        &E_{[S_{i_1}]}(-c_1)\cdots E_{[S_{i_m}]}(-c_m)h_1(b_1^{-1})\cdots h_n(b_n^{-1}) \Theta(u_{\geq 0}^+u^{-1})a\\
        =& E_{[TS_{i_1}]}(-c_1)\cdots E_{[TS_{i_m}]}(-c_m)h_1(b_1)\cdots h_n(b_n)u_{\geq 0}^+ u^{-1}a^{-1}.
    \end{align*}
    Then we have 
    \begin{align*}
        &\Theta(u_{\geq 0}^+u^{-1})a^2 (u_{\geq 0}^+ u^{-1})^{-1}\\
        =& h_1(b_1)\cdots h_n(b_n) E_{[S_{i_m}]}(c_m) \cdots E_{[S_{i_1}]}(c_1)E_{[TS_{i_1}]}(-c_1)\cdots E_{[TS_{i_m}]}(-c_m) h_1(b_1)\cdots h_n(b_n).
    \end{align*}

    Using Proposition \ref{ab}, there exist $\alpha_1,\cdots,\alpha_n\in \mathbb{R}_{>0}$ and $\beta_1,\cdots,\beta_m \in \mathbb{R}_{\geq 0}$, calculated by $c_1,\cdots,c_m$, $b_1,\cdots,b_n$, and satisfying
\begin{align*}
    &h_1(b_1)\cdots h_n(b_n) E_{[S_{i_m}]}(c_m) \cdots E_{[S_{i_1}]}(c_1)E_{[TS_{i_1}]}(-c_1)\cdots E_{[TS_{i_m}]}(-c_m) h_1(b_1)\cdots h_n(b_n) \\
    &= E_{[TS_{i_1}]}(-\beta_1)\cdots E_{[TS_{i_m}]}(-\beta_m)h_1(\alpha_1)\cdots h_n(\alpha_n) E_{[S_{i_m}]}(\beta_m)\cdots E_{[S_{i_1}]}(\beta_1).
\end{align*}

Then it is easy to see that 
\begin{align*}
    & u=E_{[S_{i_m}]}(\beta_m)\cdots E_{[S_{i_1}]}(\beta_1)u_{\geq 0}^+ \in \mathbf{U}_{\geq 0}^+ ,\\
    & a=h_1(\alpha_1^{\frac{1}{2}})\cdots h_n(\alpha_n^{\frac{1}{2}}) \in \mathbf{H}_{>0},
\end{align*}
satisfies $\Theta(zu^{-1}a^{-1})=zu^{-1}a^{-1}$.
There exists $k\in \mathbf{K}$ such that $k=zu^{-1}a^{-1}$, or equivalently, $z=kau$.
By the uniqueness of Iwasawa decomposition, $k,a,u$ are exactly the $\mathbf{K},\mathbf{A},\mathbf{U}^+$ components of $z$.
It is clear that the $\mathbf{A},\mathbf{U}^+$ components of $z$ are still in $\mathbf{G}_{\geq 0}$, and the theorem is proved.
\end{proof}

Note that the proofs of the previous proposition and theorem  give an explicit algorithm for computing the $\mathbf{A}$ and $\mathbf{U}^+$ components of elements in $\mathbf{G}_{\geq 0}$.
Although we have not given an explicit formula for the $\mathbf{K}$ component of $\mathbf{G}_{\geq 0}$, we have the following corollary concerning $\mathbf{K}$, which shows that the intersection of totally positive monoid and the maximal compact subgroup is trivial.

\begin{corollary}
    The intersection $\mathbf{G}_{\geq 0}\bigcap \mathbf{K}=\{1\}$.
\end{corollary}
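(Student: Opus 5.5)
Let $z\in \mathbf{G}_{\geq 0}\cap \mathbf{K}$. The plan is to compare two Iwasawa decompositions of $z$ and then read off the consequences inside the totally positive monoid.

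First, since $z\in\mathbf{K}$, the decomposition $z=z\cdot 1\cdot 1$ has $\mathbf{K}$-component $z$ and trivial $\mathbf{A}$- and $\mathbf{U}^+$-components. By uniqueness in the Iwasawa decomposition of $\mathbf{G}$, this is the only such decomposition.

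Second, since $z\in\mathbf{G}_{\geq 0}$, we follow the proof of the preceding theorem. Write
\[
z=E_{[TS_{i_1}]}(-c_1)\cdots E_{[TS_{i_m}]}(-c_m)\,h_1(b_1)\cdots h_n(b_n)\,u^+_{\geq 0}.
\]
That proof gives $\mathbf{U}^+$-component $u=E_{[S_{i_m}]}(\beta_m)\cdots E_{[S_{i_1}]}(\beta_1)u^+_{\geq 0}$ and $\mathbf{A}$-component $a=\prod_j h_j(\alpha_j^{1/2})$. Comparing with the first step forces $u=1$ and $a=1$.

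From $a=1$ we get $\alpha_j=1$ for all $j$, because $\mathbf{A}$ is simply connected, so $\prod h_j(t_j)$ with $t_j>0$ determines the $t_j$. By the recursion \eqref{alpha}, every factor satisfies
\[
\tilde\alpha_{i_k}=c_k^2\prod(\cdots)+(\cdots+1)^2\geq 1 .
\]
From $u=1$ and $\beta_k\geq 0$, we want to conclude that all $\beta_k=0$ and $u^+_{\geq 0}=1$. The tool here is uniqueness of the parametrization in Lemma \ref{u>=0}, or a more elementary argument: the projection of $\mathbf{U}^+$ onto each simple root coordinate is additive, and for a product of nonnegative generators it equals the sum of the parameters attached to that simple root. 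A trivial product therefore has all parameters zero, since they are nonnegative and sum to zero. Once $u^+_{\geq 0}=1$ and every $\beta_k=0$, tracing back through the formula \eqref{beta} shows inductively that each $c_{t+1}=0$, because the numerator of \eqref{beta} is a positive multiple of $c_{t+1}$. Then every $\tilde\alpha=1$, so $\alpha_j=b_j^2=1$ and $b_j=1$. Hence $z=1$.

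An alternative, cleaner route avoids the recursion entirely. Since $z\in\mathbf{K}=\mathbf{G}^\Theta$, we have $\Theta(z)=z$. Write $z=u^-tu^+$ as in Lemma \ref{utu}(2). Applying $\Theta$ gives $\Theta(z)=\Theta(u^-)\,t^{-1}\,\Theta(u^+)$, where $\Theta(u^-)\in\mathbf{U}^+$ and $\Theta(u^+)\in\mathbf{U}^-$, and the nonnegative generators $E_{[TS_i]}(-s)$ are sent to $E_{[S_i]}(-s)$, which are nonpositive. Rather than working with signs, use $z\in \mathbf{K}$ directly: $\Theta(z)^{-1}z=1$. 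The first part of the paper's theorem shows that $\Theta(z)^{-1}z$ is the left-hand side of Proposition \ref{ab}, conjugated appropriately. Since $z=kau$ with $k=z$, we get $a^2=1$ and $u=1$, which returns us to the argument above.

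The main obstacle is the final step: deducing $c_k=0$ and $b_j=1$ from $a=u=1$. This needs the positivity and strict monotonicity of the recursion, in particular that $\tilde\alpha_{i_k}\geq 1$ with equality only when $c_k=0$, together with the injectivity of positive coordinates in $\mathbf{U}^+_{\geq0}$ (the sum of parameters along each simple root vanishing forces all of them to vanish). We will verify these using the simple-root coordinate homomorphism $\mathbf{U}^+\to\mathbb{C}^n$.
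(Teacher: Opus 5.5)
Your argument is correct and is essentially the paper's proof: uniqueness of the Iwasawa decomposition forces $a=u=1$, which gives $\beta_k=0$, $u^+_{\geq 0}=1$ and $\alpha_j=1$; tracing back through (\ref{beta}) then gives $c_k=0$, using that the updates (\ref{inductive1})--(\ref{inductive2}) only multiply by positive factors, and finally $b_j=1$. The one real addition is your simple-root coordinate homomorphism on $\mathbf{U}^+$ (quotient by the root subgroups of height $\geq 2$), which properly justifies the step $\beta_k=0$, $u^+_{\geq 0}=1$ that the paper only asserts; your second ``alternative'' paragraph is not a different route, since it loops back to the same computation.
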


\begin{proof}
    For any element $z\in \mathbf{G}_{\geq 0}\bigcap \mathbf{K}$ and write its Iwasawa decomposition $z=kau$, $k\in \mathbf{K},a\in \mathbf{A}, u\in \mathbf{U}^+$, then $a=u=1$.
    Following the same process as the proof of the previous theorem and keep the notations, we have 
    \begin{align*}
    & 1=u=E_{[S_{i_m}]}(\beta_m)\cdots E_{[S_{i_1}]}(\beta_1)u_{\geq 0}^+  ,\\
    & 1=a=h_1(\alpha_1^{\frac{1}{2}})\cdots h_n(\alpha_n^{\frac{1}{2}}) .
\end{align*}

Then $u_{\geq 0}^+=E_{[S_{i_1}]}(-\beta_1)\cdots E_{[S_{i_m}]}(-\beta_m)$.
Since $u_{\geq 0}^+ \in \mathbf{U}_{\geq 0}^+$ and $\beta_1,\cdots,\beta_m\geq 0$, we have $\beta_1=\cdots=\beta_m=0$ and $u_{\geq 0}^+=1$.

Since $h_1(\alpha_1^{\frac{1}{2}})\cdots h_n(\alpha_n^{\frac{1}{2}})=1$ and $\alpha_1,\cdots,\alpha_n>0$, we take logarithms of the equations $\prod_{i=1}^n \alpha_i^{\frac{a_{ij}}{2}}=1$ for $j=1,\cdots,n$ and use the fact that the Cartan matrix is invertible, and then obtain that $\alpha_1=\cdots=\alpha_n=1$.

For $1\leq k\leq m$, since $0=\beta_k=\beta_k^{(m)}\prod_{j=1}^{n}b_j^{-a_{j,i_k}}$, we have $\beta_k^{(m)}=0$.

Firstly, consider $\beta_m^{(m)}=0$.
Then the numerator of formula (\ref{beta}) is zero (case $t+1=m$).
Since $\tilde{\alpha}_{i_k}>0$ and $c_{m}\sum_{\substack{i_s=i_{m}\\1\leq s\leq m-1}} \beta_s^{(m-1)}+1>0$, we have $c_m=0$.
In fact, for any $k$, if $\beta_k^{(k)}=0$, then $c_k=0$ by the same argument.

Now consider $k<m$ and $\beta_k^{(m)}=0$.
By the inductive formulas (\ref{inductive1}) and (\ref{inductive2}), we have $\beta_k^{(m-1)}=\cdots=\beta_k^{(k)}=0$, and $c_k=0$ follows.

So we have $c_1=\cdots=c_m=0$.
By formula (\ref{alpha}), we have $\tilde{\alpha}_{i_k}=1$ for all $1\leq k\leq m$.
Then $b_j^2=1$, and thus $b_j=1$ for all $1\leq j\leq n$.

As a result, we have $z=E_{[TS_{i_1}]}(-c_1)\cdots E_{[TS_{i_m}]}(-c_m)h_1(b_1)\cdots h_n(b_n)u_{\geq 0}^+=1$, and the corollary is proved.
\end{proof}

\subsection*{Acknowledgments}
This paper is a continuation of a series of adjoint works with Professor Jie Xiao. 
The author would like to thank Professor Jie Xiao for his guidance and valuable discussions.
The author would also like to thank Yixin Lan for the discussions.
The author was partially supported by National Natural Science Foundation of China [Grant No. 12471030].


\begin{thebibliography}{1}

\bibitem{Knapp}
A.~W. Knapp.
\newblock {\em Lie groups beyond an introduction}, volume 140 of {\em Progress
  in Mathematics}.
\newblock Birkh\"auser Boston, Inc., Boston, MA, second edition, 2002.

\bibitem{notes1}
B.~Li and J.~Xiao.
\newblock Notes on {C}hevalley groups and root category {I}.
\newblock {\em arXiv:2505.17805}, 2025.
\newblock In press.

\bibitem{notes3}
B.~Li and J.~Xiao.
\newblock Notes on {C}hevalley groups and root category {III}: the region of
  total positivity.
\newblock {\em arXiv:2604.17471}, 2026.

\bibitem{notes2}
B.~Li and J.~Xiao.
\newblock Notes on {C}hevalley groups and root category {II}: Compact {L}ie
  groups and representations.
\newblock {\em Journal of Algebra}, 711:113--155, 2027.

\bibitem{TR1}
G.~Lusztig.
\newblock Total positivity in reductive groups.
\newblock In {\em Lie theory and geometry}, volume 123 of {\em Progr. Math.},
  pages 531--568. Birkh\"auser Boston, Boston, MA, 1994.

\bibitem{TC}
G.~Lusztig.
\newblock Total positivity and canonical bases.
\newblock In {\em Algebraic groups and {L}ie groups}, volume~9 of {\em Austral.
  Math. Soc. Lect. Ser.}, pages 281--295. Cambridge Univ. Press, Cambridge,
  1997.

\bibitem{TR2}
G.~Lusztig.
\newblock Total positivity in reductive groups, {II}.
\newblock {\em Bull. Inst. Math. Acad. Sin. (N.S.)}, 14(4):403--459, 2019.

\bibitem{1997ROOT}
L.~Peng and J.~Xiao.
\newblock Root categories and simple lie algebras.
\newblock {\em Journal of Algebra}, 198(1):19--56, 1997.

\bibitem{RINGEL1990137}
C.~M. Ringel.
\newblock Hall polynomials for the representation-finite hereditary algebras.
\newblock {\em Adv. Math.}, 84(2):137--178, 1990.

\end{thebibliography}
\end{document}